\newtheorem{Theorem}{\sc Theorem}
\newtheorem{Definition}[Theorem]{\sc Definition}
\newtheorem{Proposition}[Theorem]{\sc Proposition}
\newtheorem{Corollary}[Theorem]{\sc Corollary}
\newtheorem{Problem}[Theorem]{\sc Problem}
\newcommand{\R}{{\if mm {\rm I}\mkern -3mu{\rm R}\else \leavevmode
		\hbox{I}\kern -.17em\hbox{R} \fi}}
\newcommand{\Div}{\mbox{\rm Div\,}}
\newcommand{\bu}{\mbox{\boldmath{$u$}}}
\newcommand{\bv}{\mbox{\boldmath{$v$}}}
\newcommand{\bw}{\mbox{\boldmath{$w$}}}
\newcommand{\bx}{\mbox{\boldmath{$x$}}}
\newcommand{\fb}{\mbox{\boldmath{$f$}}}
\newcommand{\bxi}{\mbox{\boldmath{$\xi$}}}
\newcommand{\bsigma}{\mbox{\boldmath{$\sigma$}}}
\newcommand{\btau}{\mbox{\boldmath{$\tau$}}}
\newcommand{\bnu}{\mbox{\boldmath{$\nu$}}}
\newcommand{\bzero}{\mbox{\boldmath{$0$}}}
\newcommand{\bz}{\mbox{\boldmath{$z$}}}
\def\sqr#1#2{{
		\vcenter{
			\vbox{\hrule height.#2pt
				\hbox{\vrule width.#2pt height#1pt \kern#1pt
					\vrule width.#2pt
				}
				\hrule height.#2pt
			}
		}
}}
\def\div{\mathop{\rm div}\nolimits}
\def\Div{\mathop{\rm Div}\nolimits}
\def\bar{\overline}
\def\real{\mathbb{R}}
\def\nat{\mathbb{N}}
\def\lista#1
\begin{document}
	
\title{A generalized Stokes system with a nonsmooth slip boundary
condition\thanks{\, 
Project is supported by the European Union's Horizon 2020 Research and Innovation 
Programme under the Marie Sk{\l}odowska-Curie grant agreement No. 823731 CONMECH,
NSF of Guangxi Grant Nos. 2018GXNSFAA281353 and 2020GXNSFAA159052, 
the Beibu Gulf University Project No. 2018KYQD06, 
the Ministry of Science and Higher Education of Republic of Poland 
under Grant No. 440328/PnH2/2019, and 
the National Science Centre of Poland under Project No. 2021/41/B/ST1/01636.	
}}
	
\author{ 
Jing Zhao \footnote{\, 
College of Sciences, Beibu Gulf University, Qinzhou, Guangxi 535000, P.R. China.}, \ \ 
Stanis{\l}aw Mig\'orski \footnote{\,
College of Sciences, Beibu Gulf University, Qinzhou,
Guangxi 535000, P.R. China,	
and  Jagiellonian University in Krakow, Chair of Optimization and Control, ul. Lojasiewicza 6, 30348 Krakow, Poland.
E-mail address: stanislaw.migorski@uj.edu.pl.} \ \ \mbox{and} \ \ 
Sylwia Dudek \footnote{\, 
Department of Applied Mathematics,
Faculty of Computer Science and Telecommunications, 
Krakow University of Technology, 
ul. Warszawska 24, 31155 Krakow, Poland. 
E-mail address: sylwia.dudek@pk.edu.pl.}
}
	
\date{}
\maketitle
	
\noindent {\bf Abstract.} \
A class of quasi-variational-hemivariational inequalities in reflexive Banach spaces is studied.
The inequalities contain a convex potential, a locally Lipschitz superpotential, and an implicit
obstacle set of constraints. 
Results on the well posedness are established
including existence, uniqueness, dependence of solution on the data, and the compactness of the solution set in the strong topology. 
The applicability of the results is illustrated by the steady-state Stokes model of a generalized Newtonian incompressible fluid with a nonmonotone slip boundary condition.
	
\smallskip
	
\noindent
{\bf Key words.} 
Stokes equation; Bingham type fluid; variational--hemivariational inequality; generalized subgradient; slip condition.
	
\smallskip
	
\noindent
{\bf 2010 Mathematics Subject Classification. } 
35J66; 35J87; 47J20; 49J40; 76D05.

	
\section{Introduction}\label{Section1}

In this paper we study the stationary Stokes equations with mixed boundary conditions which model a generalized Newtonian fluid of Bingham type. We deal with a nonmonotone version of the 
slip boundary condition described by the generalized subgradient of a locally Lipschitz 
potential. 
The paper is a continuation of our recent works~\cite{MD2021AMO,Zhao} in which the weak formulations lead to the  variational-hemivariational inequalities.
The novelty of the present paper is to consider the Stokes problem with an additional implicit obstacle constraint set depending on the solution.
This additional constraint makes the problem more involved since the resulting weak formulation  turns out to be a quasi variational-hemivariational inequality, see Problem~\ref{PV}. 
For the latter, we will prove an existence and compactness result in Theorem~\ref{Theorem1a}, 
and under stronger hypotheses,
a uniqueness result in Proposition~\ref{remkol2}.
Another novelty is a continuous dependence result, see~Theorem~\ref{Convergence} and Corollary~\ref{C123b}, 
in the strong topology, 
in contrast to~\cite{MD2021AMO,Zhao}, where the compactness of the solution set was established in the weak topology.

In the paper we use the basic material 
following~\cite{DMP1,DMP2,MOSbook,NP}.
Let $(X, \| \cdot \|_X)$ be a reflexive Banach space with its topological dual denoted by $X^*$. 
The notation $\langle\cdot,\cdot\rangle_{X^*\times X}$
stands for the duality brackets between $X^*$ and $X$.
Often, when no confusion arises, for simplicity, 
we omit the subscripts. 
Given a convex and lower semicontinuous function 
$\varphi \colon X \to \real$, the set defined by
\begin{equation*}
\partial \varphi (x) = 
\{ x^*\in X^* \mid \langle x^*, v -x \rangle
\le \varphi(v)-\varphi(x) \ \mbox{for all} \ v \in X\}
\end{equation*}
is called the (convex) subdifferential of $\varphi$ 
and an element $x^* \in \partial \varphi (x)$
is called a subgra\-dient of $\varphi$ at $x$.
Let $h\colon X \to \mathbb{R}$ be a locally Lipschitz function.
The generalized subgradient of $h$ at $x$ is given by
\[
\partial h (x) = \{\, \zeta \in X^* \mid h^{0}(x; v) \ge
{\langle \zeta, v \rangle} \ \mbox{for all} \ v \in X \, \},
\]
where 
\begin{equation*}
h^{0}(x; v) = \limsup_{y \to x, \ \lambda \downarrow 0}
\frac{h(y + \lambda v) - h(y)}{\lambda}
\end{equation*}
denotes the generalized (Clarke) directional derivative of $h$ at the point $x \in X$ in the direction $v \in X$. 
A locally Lipschitz function $h$ is said to be 
(Clarke) regular at the point $x \in X$ if for all $v \in X$ the 
derivative $h' (x; v)$ exists and 
$h^0(x; v) = h'(x; v)$.

A space $X$ with the weak topology is denoted by $X_w$.
The symbols $\rightharpoonup$ and $\to$ denote the weak convergence and the strong convergence, respectively.
If $U \subset X$, we write  
$\| U \|_X = \sup \{ \| x \|_X \mid x \in U \}$.
Given Banach spaces $X$ and $Y$, the notation 
${\mathcal L}(X, Y)$ stands for the set of all linear bounded operators from $X$ to $Y$.
For $A \in {\mathcal L}(X, Y)$, 
its adjoint operator $A^* \in {\mathcal L}(Y^*, X^*)$ 
is defined by
$\langle A^* y^*, x \rangle := \langle y^*, A x \rangle$ 
for every $y^* \in Y^*$ and $x \in X$.
The operator norm in ${\mathcal L(X, Y)}$ is denoted 
by $\| A \|$.


Let $X$ be a Banach space and 
$A \colon X \to X^*$ be an operator. Then

\noindent 
(i) 
$A$ is monotone,
if $\langle Au - A v, u-v \rangle_{X^* \times X} \ge 0$
for all $u$, $v \in X$,

\noindent 
(ii) 
$A$ is maximal monotone, if it is monotone, and
$\langle Au - w, u - v \rangle_{X^* \times X} \ge 0$ for any $u \in X$ implies $w = Av$,

\noindent 
(iii) 
$A$ is pseudomonotone,
if it is bounded (maps bounded sets into bounded sets) 
and $u_n \rightharpoonup u$ in $X$ with
$\displaystyle \limsup \langle A u_n, u_n -u \rangle_{X^* \times X} \le 0$
imply $\displaystyle \langle A u, u - v \rangle_{X^* \times X} \le
\liminf \langle A u_n, u_n - v \rangle_{X^* \times X}$ for all $v \in X$.
Equivalently, see, e.g., \cite[Proposition~3.66]{MOSbook},  
if $X$ is a reflexive Banach space, then 
$A$ is pseudomonotone, 
if and only if it is bounded and 
$u_n \rightharpoonup u$ in $X$ with 
$\displaystyle \limsup\,\langle A u_n, u_n - u \rangle_{X^* \times X} \le 0$ imply 
$\displaystyle \lim \,\langle A u_n, u_n - u \rangle_{X^* \times X} = 0$
and $A u_n \rightharpoonup A u$ in $X^*$.
%
%
%
\begin{Definition}\label{DefMosco} 
{\rm (see~\cite{DMP2,Mosco})}
Let $Y$ be a normed space. A sequence $\{ C_n \}$ of closed and convex sets in $Y$, is said to converge in the Mosco sense to a closed and convex set $C \subset Y$, denoted by
$C_n \ \stackrel{M} {\longrightarrow} \ C$
as $n \to \infty$, if 
	
	\smallskip
	
	\noindent	
	{\rm $(m_1)$}\ \
	for any $z_{n} \in C_{n}$
	with $z_{n} \rightharpoonup z$ in $Y$, up to a subsequence, we have $z \in C$,
	
	\smallskip
	
	\noindent 
	{\rm $(m_2)$}\ \
	for any $z \in C$, there exists $z_n \in C_n$ with $z_n \to z$ in $Y$.
\end{Definition}


Finally we recall the Kakutani--Ky Fan fixed point 
theorem in a reflexive Banach space, 
see, e.g.,~\cite[Corollary~1.7.42]{DMP2}. 
\begin{Theorem}\label{KKF}
	Given a reflexive Banach space $Y$ and a nonempty, bounded, closed and convex set $D \subseteq Y$. Let $\Lambda \colon D \to 2^D$
	be a set-valued map with nonempty, closed and convex values such that its graph is sequentially closed in $Y_w \times Y_w$ topology.
	Then $\Lambda$ has a fixed point.
\end{Theorem}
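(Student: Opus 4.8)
The plan is to deduce this statement from the classical Kakutani--Ky Fan (Fan--Glicksberg) fixed point theorem for locally convex spaces, carried out with respect to the weak topology on $D$. First I would observe that, since $Y$ is reflexive and $D$ is bounded, closed and convex, the set $D$ is weakly compact: boundedness places $D$ inside a suitable multiple of the unit ball, which is weakly compact by reflexivity, while convexity together with strong closedness gives weak closedness by Mazur's theorem, and a weakly closed subset of a weakly compact set is weakly compact. Hence $D$ equipped with the weak topology is a nonempty, compact and convex subset of the locally convex Hausdorff space $Y_w$, which is the correct ambient framework for Fan--Glicksberg.

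Next I would verify the remaining hypotheses on $\Lambda$, namely that it is upper semicontinuous for the weak topology and has nonempty, weakly closed, convex values. The values $\Lambda(x)$ are convex and strongly closed by assumption, hence weakly closed (again by Mazur), and therefore weakly compact as closed subsets of the weakly compact set $D$. Because the range of $\Lambda$ is contained in the weakly compact set $D$, weak upper semicontinuity of $\Lambda$ is equivalent to its graph being closed in $Y_w \times Y_w$; so the whole matter reduces to promoting the assumed sequential closedness of the graph to genuine (net) closedness.

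This promotion is the step I expect to be the main obstacle, since $Y_w$ need not be metrizable and, in general, a sequentially closed set need not be closed. The key tool is that the weak topology of a Banach space is angelic (a refinement of the Eberlein--\v{S}mulian theorem): on relatively weakly compact sets the weak closure and the weak sequential closure coincide. The graph of $\Lambda$ lies in $D \times D$, which is weakly compact in $Y \times Y$ and hence relatively weakly compact; applying the angelic property to this product, the assumed weak sequential closedness of the graph forces it to be weakly closed. Consequently $\Lambda$ is weakly upper semicontinuous with nonempty, weakly compact, convex values.

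Finally I would apply the Fan--Glicksberg fixed point theorem to $\Lambda$ on the compact convex set $D \subseteq Y_w$ to obtain a point $x \in D$ with $x \in \Lambda(x)$, the desired fixed point. An alternative, more self-contained route would bypass Fan--Glicksberg through a finite-dimensional approximation scheme: exhaust a separable $\Lambda$-relevant subspace by an increasing family of finite-dimensional subspaces, solve the projected inclusion on each by the finite-dimensional Kakutani theorem (itself resting on Brouwer's theorem via single-valued continuous selections), and pass to the weak limit using weak sequential compactness of $D$ together with the sequentially closed graph. In that approach the same sequential-versus-topological tension reappears at the limit passage and is again resolved by Eberlein--\v{S}mulian, so the essential difficulty is the same in either presentation.
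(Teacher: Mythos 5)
The paper does not actually prove Theorem~\ref{KKF}: it is recalled as a known result and attributed to \cite[Corollary~1.7.42]{DMP2}, so there is no internal proof to match your argument against. Judged on its own terms, your derivation is correct and is the standard way to obtain this Banach-space version from the Fan--Glicksberg theorem. You correctly identify the one genuinely delicate point, namely that the hypothesis gives only \emph{sequential} closedness of the graph in $Y_w\times Y_w$, while Fan--Glicksberg (equivalently, weak upper semicontinuity into the weakly compact set $D$) requires topological closedness, and $Y_w$ is not metrizable in general. Your resolution is the right one: $D\times D$ is weakly compact in the reflexive space $Y\times Y$, and by the Eberlein--\v{S}mulian/angelicity property every point of the weak closure of a relatively weakly compact set is the weak limit of a sequence from that set, so the sequentially closed graph is in fact weakly closed; the remaining hypotheses (weak compactness of $D$ via reflexivity and Mazur, convex weakly compact values) are routine and you handle them correctly. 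The finite-dimensional Kakutani approximation you sketch as an alternative would also work, but, as you note, it faces the same sequential-versus-topological issue at the limit passage, so it buys nothing over the Fan--Glicksberg route. In short: the paper outsources this statement to a reference, and your proposal supplies a correct, essentially self-contained proof of it.
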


\section{The Stokes model for the Bingham type fluid}\label{Section2}

The Stokes equations form a system describing the flow of a fluid. They can be deduced from the nonlinear 
Navier-Stokes equations when the flow is slow and the fluid is incompressible. Then the convective term is small and can be neglected. 
We consider the weak formulation of the stationary Stokes problem with mixed boundary conditions which model a generalized Newtonian fluid of Bingham type. 
Due to the boundary conditions and additional constraints, 
the mathematical model leads naturally to an elliptic quasi variational-hemivariational inequality with constraints. 

We suppose that an incompressible fluid is
moving within a bounded domain (open and connected set) $\Omega$ in $\real^d$ with $d = 2$ and $d = 3$.
The boundary $\Gamma= \partial \Omega$ is supposed to be Lipschitz and partitioned into two disjoint, smooth and measurable parts $\Gamma_0$ and  
$\Gamma_1$ such that $|\Gamma_0| > 0$. 
The classical formulation of the Stokes flow problem reads as follows.
\begin{Problem}\label{P1} 
Find a flow velocity $\bu \colon \Omega \to \mathbb{R}^d$, 
an extra stress tensor $\mathbb{S} \colon \mathbb{M}^d \to \mathbb{M}^d$, and a pressure 
$p \colon \Omega \to \mathbb{R}$ such that 
$\bu \in K(\bu)$ and
	\begin{align}
	&- \Div {\mathbb{S}} + \nabla p = \fb
	&\mbox{\rm in} \ &\ \ \Omega, \label{2.1} \\[1mm]
	&\left\{\begin{array}{lll}
	\displaystyle 
	\mathbb{S} = {\mathbb{T}}(\mathbb{D}\bu)  
	+ g \, \frac{\mathbb{D}\bu}{\| \mathbb{D}\bu\|}
	\qquad 
	\ \, \ \ \ \mbox{\rm if} \ \ \ \mathbb{D}\bu \not= \bzero \\[3mm]
	\| \mathbb{S} \| \le g 
	\qquad\qquad \qquad \qquad \, \,  
	\ \ \ \mbox{\rm if} \ \ \ 
	\mathbb{D} \bu = \bzero
	\end{array}\right.
	&{\rm in} \ & \ \ \Omega, \label{2.2} \\[1mm]	
	&\div \bu = 0
	&\mbox{\rm in} \ & \ \ \Omega, \label{2.4a}\\[1mm]
	&\bu=0
	&\mbox{\rm on} \ & \ \ \Gamma_0, \label{2.6}\\[1mm]
	&\left\{\begin{array}{lll}
	u_\nu = 0 \\[1mm]
	-\btau_\tau(\bu) \in h(\bu_\tau) 
	\partial j_\tau (\bu_\tau)
	\end{array}\right.
	&{\rm on} \ & \ \ \Gamma_{1}, \label{2.7}
	\end{align}
\end{Problem}
\noindent 
where 
\begin{equation}\label{DEFU}
K(\bu)=\{ \, \bv \in V \mid 
k(\bv)\le r (\bu) \, \},  
\end{equation}
and the space $V$ is given in (\ref{definition:V}). 
%
Here 
$\bsigma (\bu,p) = \mathbb{S} (\mathbb{D} \bu) 
- p \, \mathbb{I}$ is the total stress tensor, $\mathbb{I}$ is the identity tensor, $p$ is the pressure, $\fb$ is called source term, and
$\mathbb{D} \bu = \frac{1}{2}
(\nabla \bu + \nabla \bu^\top)$ denotes the symmetric part of $\nabla \bu$ called also the deformation tensor.
A constitutive equation (\ref{2.2}) describes the Bingham type model in which $\mathbb{T}$ denotes a  constitutive function and $g \ge 0$ represents the plasticity threshold (yield stress).
It prescribes a maximal value $g$ (called the yield limit) that is the bound on the norm of the extra stress. If the strict inequality holds 
(the stress is low), 
there are no deformations and the fluid behave as a rigid body, when equality holds (at high stress), then the body initiates to behave as a fluid. 
The divergence-free condition (\ref{2.4a}) 
means that the fluid is incompressible, where 
the divergence of the velocity 
is given by $\div \bu = (u_{i,i})=0$.
The Dirichlet boundary condition
(\ref{2.6}) states that the fluid adheres 
to the wall $\Gamma_0$.
The first condition in (\ref{2.7}) is called the impermeability (no leak) boundary condition, 
while the second one is called the nonmonotone slip boundary condition. 
%
The traction vector on the boundary in (\ref{2.7}) 
is defined by
$$
\btau (\bu, p) = \bsigma(\bu, p) \bnu \ \ \mbox{on} \ \ \Gamma_1,
$$
where $\bnu$ stands for the unit outward normal vector 
on $\Gamma$.
The standard scalar products in $\real^d$ and $\mathbb{M}^d$ are denoted by $``\cdot"$ and 
$``:"$, respectively, where 
$\mathbb{M}^d$ is the class of symmetric 
$d \times d$ tensors. 
Normal and tangential components of the velocity vector are represented by  
$u_\nu = \bu \cdot \bnu$ and 
$\bu_{\tau} = \bu - u_\nu \bnu$, respectively, and 
therefore, we have the relations
\begin{equation*}
\tau_\nu (\bu, p) = \btau (\bu, p) \cdot \bnu 
\ \ \mbox{and} \ \  
\btau_\tau (\bu) = 
\btau (\bu, p) - \tau_\nu (\bu, p) \bnu
\ \ \mbox{on} \ \ \Gamma_1. 
\end{equation*}
Remark that $\btau$ and $\tau_\nu$ depend on $p$   
while $\btau_\tau$ is independent of the pressure. Therefore, we get
\begin{equation}\label{Relation}
\mathbb{S}_\nu (\bu)= \tau_\nu (\bu, p) + p
\ \ \mbox{and} \ \ 
\mathbb{S}_\tau (\bu) = \btau_\tau (\bu)
\ \ \mbox{on} \ \ \Gamma.
\end{equation}

We need the following hypotheses on the data.

\medskip

\noindent 
$\underline{H(T)}:$ \quad 
$\displaystyle {\mathbb{T}} \colon \Omega \times \mathbb{M}^d \to \mathbb{M}^d$ is a function such that

\smallskip

\lista{ 
	\item[(i)] 
	${\mathbb{T}}(\cdot, {\mathbb{D}})$ is measurable on $\Omega$ for all ${\mathbb{D}} \in \mathbb{M}^d$,
	\smallskip
	\item[(ii)] 
	${\mathbb{T}}(\bx, \cdot)$ is continuous on $\mathbb{M}^d$ for a.e. $\bx \in \Omega$,
	\smallskip
	\item[(iii)] 
	$\displaystyle
	\| {\mathbb{T}}(\bx, {\mathbb{D}}) \|_{\mathbb{M}^d} \le a_0(\bx) 
	+ a_1 \, \| {\mathbb{D}} \|_{\mathbb{M}^d}$
	for all ${\mathbb{D}} \in \mathbb{M}^d$, a.e. $\bx \in \Omega$ with $a_0 \in L^2(\Omega)$, $a_0$, $a_1 > 0$,
	\smallskip
	\item[(iv)] 
	$\displaystyle 
	({\mathbb{T}}(\bx,{\mathbb{C}}) 
	- {\mathbb{T}} (\bx, {\mathbb{D}}))
	: ({\mathbb{C}} - {\mathbb{D}}) 
	\ge m_T \, \|{\mathbb{C}} - {\mathbb{D}} \|_{\mathbb{M}^d}^2$ 
	for all ${\mathbb{C}}$, ${\mathbb{D}}
	\in \mathbb{M}^d$, 
	a.e. $\bx\in \Omega$ with $m_T > 0$. 
}

\medskip

\noindent 
$\underline{H(f, g)}:$ \quad
$\fb \in L^2(\Omega; \mathbb{R}^d)$, $g \in L^2(\Omega)$, $g \ge 0$.

\medskip 

\noindent
$\underline{H(h)}:$ \quad 
$h\colon \Gamma_1 \times \real^d \to \real$ 
is a function such that

\lista{
	\item[{\rm (i)}] 
	$h(\cdot, \bxi)$ is measurable on $\Gamma_1$ for all 
	$\bxi \in \real^d$, \smallskip
	\item[{\rm (ii)}] 
	$h(\bx, \cdot)$ is continuous on $\real^d$ for a.e. $\bx \in \Gamma_1$, \smallskip
	\item[{\rm (iii)}] 
	$0 < h_0 \le h(\bx, \bxi) \le h_1$ for all $\bxi \in \real^d$, a.e. $\bx \in \Gamma_1$.
}

\medskip

\noindent 
$\underline{H(j_\tau)}:$ \quad 
$j_\tau \colon \Gamma_1\times \real^d \to \real$ 
is a function such that 

\smallskip

\lista{
	\item[{\rm (i)}] 
	$j_\tau(\cdot,\bxi)$ is measurable on $\Gamma_1$ for all $\bxi\in \real^d$, \smallskip
	\item[{\rm (ii)}] 
	$j_\tau(\bx,\cdot)$ is locally Lipschitz for a.e. $\bx\in \Gamma_1$, \smallskip
	\item[{\rm (iii)}]  
	$\displaystyle 
	\| \partial j_\tau(\bx,\bxi) \|_{\real^d} \le b_0 (\bx) 
	+ b_1 \, \| \bxi \|_{\real^d}$ 
	\, for all \, $\bxi \in \real^d$, a.e. $\bx\in \Gamma_1$ with $b_0 \in L^2(\Gamma_1)$, $b_0$, $b_1 \ge 0$, \smallskip
	\item[{\rm (iv)}]
	either $j_\tau(\bx, \cdot)$ or $-j_\tau(\bx, \cdot)$ is regular for a.e. $\bx \in \Gamma_1$ 
	(see Section~\ref{Section1}).	
}

\medskip 

\noindent $\underline{H(k, r)}:$ \quad 
$k$, $r \colon V\to \real$ are functions such that

\smallskip

\lista{
	\item[{\rm (i)}] 
	$k$ is subadditive, positively homogeneous, 
	and weakly lower semicontinuous,
	\smallskip
	\item[{\rm (ii)}] 
	$r$ is weakly continuous, and
	$r(\bv) > 0$ for all $\bv \in V$.
}

\medskip

In hypothesis $H(j_\tau)$(iii), and in what follows, $\partial j$ denotes the ge\-ne\-ra\-lized gradient of the function $j$ with respect to its last variable.

When the constitutive function is of the form
\begin{equation}\label{const-law}
\mathbb{T} (\bx,\mathbb{D}) = 
\mu(\|\mathbb{D}\|) \mathbb{D} 
\ \ \ \mbox{for} \ \ \ \mathbb{D} \in {\mathbb{M}^d}, \ \mbox{a.e.} \ \bx \in \Omega 
\end{equation}
with $\mu \colon [0,\infty)\to \real$ a given viscosity function, then the model is called the generalized Newtonian fluid.
If $\mu(r) = \mu_0$ for $r \ge 0$ with $\mu_0 > 0$ 
a given viscosity constant, then (\ref{const-law}) 
reduces to $\mathbb{T}(\bx, \mathbb{D}) = \mu_0 \, \mathbb{D}$ which is the linear law for the usual Newtonian fluid which clearly satisfies $H(T)$. 
From the constitutive law (\ref{2.2}) one recovers 
the Bingham type model of Newtonian fluid 
(if $\mu (r) = \mu_0$ for $r \ge 0$)  
and the Navier-Stokes system (when $g = 0$). 
%
Under the hypothesis
	
\medskip
	
\noindent 
$\underline{H(\mu)}:$ \quad 
$\mu \colon [0, \infty) \to \real$ is such that
	
\smallskip
	
\lista{ 
\item[\rm (i)] 
$\mu$ is continuous and 
$0 < \mu_0 \le \mu(r) \le \mu_1$ for all $r \ge 0$, \smallskip 
\item[\rm (ii)] 
$(\mu(\| {\mathbb{C}}\|) {\mathbb{C}} 
- \mu(\| {\mathbb{D}} \|) {\mathbb{D}}) 
: ({\mathbb{C}} - {\mathbb{D}}) 
\ge \mu_2 \, \| {\mathbb{C}} - {\mathbb{D}} \|$ 
for all $\mathbb{C}$, $\mathbb{D} \in \mathbb{M}^d$ with $\mu_2 > 0$,
}
	
\medskip
	
\noindent 
the function ${\mathbb{T}}$ defined by $(\ref{const-law})$ satisfies
$H(T)$ with $a_0(\bx) = 0$ a.e., $a_1 = \mu_1$ and 
$m_T = \mu_2$.
Hypothesis $H(\mu)$ holds for typical models like the Carreau-type and power-law models,  see~\cite{BaoBar,BarLiu,Galdi,malek_book,malek}. 
Further, $H(\mu)$(ii) is satisfied when $\mu$ 
is monotonically increasing, see~\cite[Remark~3]{Baran2017}.
%
A particular version of Problem~\ref{P1} 
has been studied in~\cite{Zhao} 
for the Newtonian fluid when 
the yield limit $g=0$ and 
the linear constitutive function 
$\mathbb{T} (\bx,\mathbb{D}) = 
2 \, {\widetilde{\mu}} \, \mathbb{D}$ 
for $\mathbb{D} \in {\mathbb{M}^d}$, 
a.e. $\bx \in \Omega$,
where ${\widetilde{\mu}} > 0$ is a given viscosity, 
and without the additional constraint relation.


For the weak formulation we need the following spaces
\begin{equation}\label{definition:V}
V = \text{closure of}\ \widetilde{V}\ \text{in}\ H^1(\Omega;\real^d),  
\end{equation} 
\begin{equation*}
\widetilde{V}=\{ \, \bv\in C^\infty({\bar{\Omega}};\real^d) \mid 
\div \, \bv = 0 \ \text{in}\ \Omega,\  \bv = 0\ \text{on}\ \Gamma_0,\  v_\nu = 0 
\ \text{on}\ \Gamma_1 \, \}, \nonumber
\end{equation*}
\noindent 
and a set-valued map $K\colon V\to 2^V$ defined by
(\ref{DEFU}). 
%
From the Korn inequality, we know 
that two norms
$\| \bv \| = \| \bv \|_{H^{1}(\Omega;\real^d)}$ 
and
$\| \bv \|_V = \| \mathbb{D}\bv \|_{L^2(\Omega;\mathbb{M}^d)}$ 
are equivalent for $\bv \in V$.
It is well known that there exists the trace operator denoted by 
\begin{equation}\label{tracecompact}
\gamma\colon V \subset H^1(\Omega;\real^d) 
\to L^2(\Gamma;\mathbb{R}^d)
\end{equation} 
which is linear, continuous and compact,
see, e.g.,~\cite[Section~2.5.4, Theorems~5.5 and~5.7]{Necas}.
Its norm in the space 
$\mathcal{L}(V,L^2(\Gamma; \real^d))$
is denoted by $\|\gamma\|$. 
Moreover, instead of $\gamma \bv$, 
for simplicity, we often write $\bv$.


The weak formulation of Problem~\ref{P1} can be 
obtained by a procedure used in~\cite{Zhao}. 
We suppose that $\bu$, $\mathbb{S}$ and $p$ 
are sufficiently smooth functions that satisfy Problem~\ref{P1}. We multiply (\ref{2.1}) by 
$\bv - \bu$ with $\bv \in K(\bu)$, 
apply the Green formula, 
and use (\ref{2.2})--(\ref{2.7}) to get
\begin{Problem}\label{PV}
Find a velocity $\bu \in K(\bu)$ such that 
\begin{eqnarray*}
\displaystyle 
&&\hspace{-0.7cm}
\int_{\Omega} \mathbb{T} (\|{\mathbb{D}}\bu\|) : 
{\mathbb{D}} (\bv-\bu) \, dx 
+ \int_\Omega g \, (\| \mathbb{D}\bv\|
- \| \mathbb{D} \bu\|)\, dx \\
&&\hspace{-0.7cm}\qquad
+\int_{\Gamma_1} h(\bu_\tau ) j^0(\bu_\tau; \bv_\tau-\bu_\tau) \, d\Gamma \ge
\int_\Omega \fb \cdot (\bv-\bu) \, dx
\ \ \mbox{\rm for all} \ \ \bv \in K(\bu).
\end{eqnarray*}
\end{Problem}

The second boundary condition in (\ref{2.7}) describes a nonsmooth generalization of the Navier-Fujita slip condition.	
A prototype of (\ref{2.7}) is the linear slip condition 
of Navier of the form 
$-\btau_\tau(\bu) = k \, \bu_{\tau}$ on $\Gamma_1$ 
with $k > 0$ which was introduced in~\cite{Navier}. 
It simply states that the tangential velocity is proportional to the shear stress. 
Several variants of this law have been discussed 
in the literature, for instance:  
the nonlinear Navier-type slip condition,  
see~\cite{Leroux},
the Navier-Fujita condition of frictional type, 
see~\cite{Fu1,Fu2,Fu3,Saito,Safu},
the nonlinear Navier-Fujita slip condition,  
see~\cite{Leroux1}. 
In all of the aforementioned papers, the laws are 
modeled by condition (\ref{2.7}) with the convex potential $j \colon \real^d \to \real$, 
$j(\bxi) = \| \bxi \|$ for $\bxi \in \real^d$, 
where $\partial j$ stands for the convex 
subdifferential.

The condition (\ref{2.7}) is however more general and 
allows to deal with nonmonotone slip boundary
conditions of frictional type if $\partial j$ 
denotes the generalized subgradient of Clarke
for a nonconvex locally Lipschitz potentials.
For illustration, consider the following 
one dimensional example.  
Let $j_\lambda \colon \real \to \real$ be a potential 
in condition (\ref{2.7}) which depends on a positive parameter $\lambda > 0$ and defined by
$$
j_\lambda(r) =
\begin{cases}
\displaystyle 
\sqrt{|r|^2 + \lambda^2} - \lambda 
&\text{{\rm if} \ $|r| \le 1,$} \\[2mm]
\displaystyle
\bigg(\frac{1}{\sqrt{1+\lambda^2}} -1\bigg)|r|
+ \ln |r| + \sqrt{1+\lambda^2} - \lambda -
\frac{1}{\sqrt{1+\lambda^2}} +1
&\text{{\rm if} \ $|r| > 1$}
\end{cases}
$$
for $r \in \real$.
The derivative of $j_\lambda$ is given by
$$
j'_\lambda(r) =
\begin{cases}
\displaystyle
\frac{r}{\sqrt{|r|^2 + \lambda^2}}
&\text{{\rm if} \ $|r| \le 1,$} \\[3mm]
\displaystyle
\frac{1}{r} + 
\frac{1}{\sqrt{1+\lambda^2}} -1
&\text{{\rm if} \ $r > 1,$} \\[3mm]
\displaystyle
\frac{1}{r} -
\frac{1}{\sqrt{1+\lambda^2}} + 1
&\text{{\rm if} \ $r < -1$}
\end{cases}
$$
for $r \in \real$.
Note that $j'_\lambda$ is a continuous 
function, so $j_\lambda \in C^1(\real)$ and 
$|\partial j_\lambda (r)| = |j'_\lambda (r) | \le 1$ 
for $r\in \real$. 
Hence $j_\lambda$ is nonconvex and regular.  
The second condition in (\ref{2.7}) with the function $j_\lambda$ models the slip weakening phenomenon 
in which the tangential traction is a decreasing 
function of the tangential velocity.
It is clear that 
$j_\lambda$ satisfies $H(j_\tau)$ with 
$b_0(\bx) = 1$ and $b_1 = 0$.
On the interval $[-1, 1]$ the function 
$j_\lambda$ approximates, as $\lambda \to 0$, 
the convex and nondifferentiable function 
$r \mapsto |r|$. 
Therefore, $j'_\lambda$ on $[-1,1]$ 
approximates the monotone graph 
of
$\real \ni r \mapsto \partial |r| \in 2^\real$. 


The obstacle set (\ref{DEFU}) is introduced  
to take into account additional constraints 
on the solution. 
For instance, $k \colon V \to \real$ of the form  
$k(\bv) = \nu_0 \int_{\Omega} \| {\mathbb{D}} \bv \|^2 \, dx$ measures 
the rate dissipation energy due to viscosity, 
where $\nu_0> 0$ is the viscosity coefficient, 
while 
$k(\bv) = \int_{\Omega} \| \bv - \bv_0\|^2 \, dx$ 
represents the velocity tracking function. 
An example of the function 
$r \colon V \to \real$ is the following
$r(\bv) = \alpha + \int_{\Omega} \|\bv(x)\| \varrho(x)\, dx$
with $\varrho \in L^2(\Omega)$, $\varrho \ge 0$, 
and $\alpha > 0$.

\smallskip

For Problem~\ref{PV} we will provide results on the well posedness. To this end, in the next section, we introduce and analyze a quasi variational-hemivariational inequality 
in an abstract setting.

\section{Quasi variational-hemivariational inequality}\label{Section4}


Let $V$ be a reflexive Banach space with the dual
$V^*$. 
The norm in $V$ and the duality brackets for the pair 
$(V^*, V)$ are denoted by $\| \cdot \|$ and 
$\langle \cdot, \cdot \rangle$, respectively. 
Let $X$ be a Hilbert space with the norm $\| \cdot\|_{X}$ and the inner product 
$\langle \cdot, \cdot \rangle_X$.

Given 
a function $\phi \colon V \to \real$, 
operators
$A \colon V \to V^*$ and 
$M \colon V \to X$, 
a set-valued map
$U \colon V \to 2^V$ and $f \in V^*$,  
we consider the following problem.

\begin{Problem}\label{QVHI}
{\it Find $u \in V$ such that $u \in U(u)$ and}
\begin{equation*}
\langle A u - f, z - u \rangle
+ j^0(Mu, Mu; Mz - Mu) 
+ \, \phi (z) - \phi (u) \ge 0
\ \ \mbox{\rm for all} \ \, z \in U(u).
\end{equation*}
\end{Problem}
%
%

We need the following hypotheses on the data.

\medskip

\noindent 
$\underline{H(A)}:$ \quad 
$A \colon V \to V^*$ is an operator such that

\smallskip

\lista{ 
	\item[(i)] $A$ is pseudomonotone, \smallskip 
	\item[(ii)] 
	$\langle Av_1 - A v_2, v_1-v_2 \rangle \ge
	m_A \| v_1 - v_2\|^2$ for all $v_1$, $v_2 \in V$
	with $m_A > 0$, i.e., $A$ is strongly monotone.
}

\medskip

%

\noindent 
$\underline{H(j)}:$ \quad 
$j \colon X \times X \to \real$ is 
such that

\smallskip

\lista{ 
	\item[\rm (i)] 
	$j(w, \cdot)$ is locally Lipschitz for all 
	$w \in X$, \smallskip 
	\item[\rm (ii)] 
	$
	\| \partial j(w, v) \|_{X} \le
	d_1 + d_2 \| w\|_X + d_3 \| v \|_X$ 
	for all $w$, $v \in X$
	with $d_1$, $d_2$, $d_3 \ge 0$, \smallskip 
	\item[\rm (iii)]
	$X \times X \times X \ni (w, v, z) \mapsto 
	j^0(w, v; z) \in \real$ is upper semicontinuous.
} 

\medskip

\noindent 
$\underline{H(M)}:$ \quad
$M \colon V \to X$ is a linear, bounded, and compact operator.

\medskip

\noindent 
$\underline{H(U)}:$ \quad
$U \colon V \to 2^V$ is a set-valued map with nonempty, closed, convex values which is weakly Mosco continuous, i.e., 
for any $\{ v_n \} \subset V$ such that 
$v_n \rightharpoonup v$ in $V$, one has $U(v_n) \ \stackrel{M} {\longrightarrow} \ U(v)$, and  
$0 \in U(v)$ for all $v\in V$.

\medskip

\noindent 
$\underline{H(\phi)}:$ \quad 
$\phi \colon V \to \real$ is such that 

\smallskip

\lista{
	\item[(i)]
	$\phi$ is convex and lower semicontinuous on $V$, 
	\smallskip 
	\item[(ii)]
	there exists $c_\phi > 0$ such that 
	$
	\phi (v_1) - \phi(v_2) \le 
	c_\phi \, \| v_1 - v_2 \|$
	for all $v_1$, $v_2 \in V$.  
}


\noindent 
$\underline{H(f)}:$ \quad $f\in V^*$.

\medskip

We begin with the following existence result for Problem~$\ref{QVHI}$.
\begin{Theorem}\label{Theorem1}
Under the hypotheses
$H(A)$, $H(j)$, $H(M)$, $H(U)$,
$H(\phi)$, $H(f)$, and 
\begin{equation}\label{SSMMAALL} 
(d_2 + d_3) \, \| M \|^2 < m_A,
\end{equation}
Problem~$\ref{QVHI}$ has a solution. 
\end{Theorem}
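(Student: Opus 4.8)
The plan is to reformulate Problem~\ref{QVHI} as a fixed point problem and to apply the Kakutani--Ky Fan theorem (Theorem~\ref{KKF}). Since both the obstacle $U(u)$ and the first (nonlocal) slot of the Clarke derivative depend on the unknown, I would freeze them: for a fixed $w \in V$ I consider the auxiliary inequality of finding $u \in U(w)$ such that
\begin{equation*}
\langle Au - f, z - u \rangle + j^0(Mw, Mu; Mz - Mu) + \phi(z) - \phi(u) \ge 0 \quad \text{for all } z \in U(w),
\end{equation*}
where $U(u)$ is replaced by $U(w)$ and the nonlocal argument $Mu$ by $Mw$, while the base point $Mu$ and the increment $Mz - Mu$ are kept intact, so that any fixed point $w = u$ is exactly a solution of Problem~\ref{QVHI}. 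Denoting by $\Lambda(w) \subseteq V$ the solution set of this auxiliary problem, the theorem reduces to producing a fixed point of $\Lambda$.

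For fixed $w$ the auxiliary problem is a variational--hemivariational inequality over the fixed closed convex set $U(w)$, and I would prove $\Lambda(w) \ne \emptyset$ by the surjectivity theory for pseudomonotone multivalued operators: by $H(A)$ the operator $A$ is pseudomonotone and strongly monotone, the Clarke term contributes — through the compact operator $M$ and the growth bound $H(j)(ii)$ — a bounded, in fact compact, pseudomonotone perturbation, and $\partial\phi + N_{U(w)}(\cdot)$ is maximal monotone; the sum is coercive, hence surjective. Coercivity and the a priori bound both come from testing with $z = 0 \in U(w)$, which is admissible by $H(U)$: combining $H(A)(ii)$, $H(j)(ii)$ and $H(\phi)(ii)$ yields, for every $u \in \Lambda(w)$,
\begin{equation*}
(m_A - d_3 \|M\|^2)\,\|u\|^2 \le (\|A0\|_{V^*} + \|f\|_{V^*} + d_1\|M\| + c_\phi + d_2\|M\|^2\|w\|)\,\|u\|.
\end{equation*}
By the smallness condition~\eqref{SSMMAALL} one has $m_A - d_3\|M\|^2 \ge m_A - (d_2+d_3)\|M\|^2 > 0$, and taking $R = (\|A0\|_{V^*} + \|f\|_{V^*} + d_1\|M\| + c_\phi)/(m_A - (d_2+d_3)\|M\|^2)$ one checks that $\|w\| \le R$ forces $\|u\| \le R$. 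Hence $\Lambda$ maps the closed ball $D = \{ v \in V : \|v\| \le R \}$ into its subsets, and $D$ is nonempty, bounded, closed and convex, as required by Theorem~\ref{KKF}.

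The heart of the proof is to show that $\Lambda$ has nonempty, closed, convex values and a graph sequentially closed in $V_w \times V_w$. Closed graph is the more routine half: if $w_n \rightharpoonup w$ and $u_n \in \Lambda(w_n)$ with $u_n \rightharpoonup u$, then $Mw_n \to Mw$ and $Mu_n \to Mu$ strongly in $X$ by $H(M)$, the constraints pass to the limit through the weak Mosco continuity of $U$ in $H(U)$ (condition $(m_1)$ gives $u \in U(w)$, and $(m_2)$ produces recovery sequences $z_n \to z$ for each $z \in U(w)$), the pseudomonotonicity $H(A)(i)$ controls the operator term after first establishing $\limsup \langle Au_n, u_n - u\rangle \le 0$ by testing with a recovery sequence for $z = u$, the convex term is handled by $H(\phi)(i)$, and the upper semicontinuity $H(j)(iii)$ together with the strong convergence of the three arguments disposes of the Clarke term. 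The delicate half, and the main obstacle, is convexity of $\Lambda(w)$, which Kakutani--Ky Fan genuinely requires but which fails for solution sets of hemivariational inequalities in general. I would attack it through the inclusion/Minty reformulation: rewriting the auxiliary inequality, via a minimax argument and the monotonicity of $A$, as an equivalent problem whose solution set is an intersection of convex sets, using the compactness of $M$ and $H(j)(iii)$ to pass between the two formulations. Making this equivalence rigorous under the stated hypotheses on $j$ is where the real work lies.

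Once $\Lambda \colon D \to 2^D$ is shown to have nonempty, closed, convex values and a sequentially $V_w \times V_w$-closed graph, Theorem~\ref{KKF} furnishes a point $u \in D$ with $u \in \Lambda(u)$. By construction $u \in U(u)$ and
\begin{equation*}
\langle Au - f, z - u \rangle + j^0(Mu, Mu; Mz - Mu) + \phi(z) - \phi(u) \ge 0 \quad \text{for all } z \in U(u),
\end{equation*}
so that $u$ solves Problem~\ref{QVHI}, which completes the argument.
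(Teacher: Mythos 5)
Your overall strategy (freeze the implicit data, apply Kakutani--Ky Fan) is the right family of ideas, and your a priori estimate and the choice of the invariant ball $D$ are essentially the ones the paper uses. But there is a genuine gap, and it is exactly the one you flag yourself: with your choice of freezing, the map $\Lambda(w)$ is the solution set of a variational--\emph{hemivariational} inequality in which the term $j^0(Mw, Mu; Mz - Mu)$ still depends on the unknown $u$ through both the base point and the direction. Under $H(j)$ the function $u \mapsto j^0(Mw, Mu; Mz - Mu)$ is neither convex nor concave, so the Minty/minimax reformulation you propose does not produce an intersection of convex sets: the Minty trick convexifies only the $A$-term and the $\phi$-term, and the residual $j^0$-term destroys convexity of $\Lambda(w)$. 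Since Theorem~\ref{KKF} genuinely requires convex values, the proof as proposed does not close, and ``making this equivalence rigorous'' is not a technical refinement but the missing idea.

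The paper's proof circumvents this by lifting the multivaluedness out of the inequality before freezing. One first observes that any solution of the inclusion version of Problem~\ref{QVHI} --- find $u \in U(u)$ and a selection $w \in \partial j(Mu, Mu)$ with $\langle Au - f, z-u\rangle + \langle w, Mz - Mu\rangle_X + \phi(z) - \phi(u) \ge 0$ for all $z \in U(u)$ --- also solves Problem~\ref{QVHI}, because $\langle w, \xi\rangle_X \le j^0(Mu,Mu;\xi)$. One then freezes the \emph{pair} $(v,w) \in V \times X$: the resulting problem is a purely convex elliptic variational inequality of the second kind over $U(v)$ with right-hand side $f - M^*w$, which has a \emph{unique} solution $p(v,w)$, and $p$ is completely continuous from $V_w \times X_w$ to $V$ (this uses both Mosco conditions and the strong monotonicity). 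Kakutani--Ky Fan is then applied on a bounded closed convex set $D \subset V \times X$ to $\Lambda(v,w) = \big(p(v,w),\, \partial j(Mp(v,w), Mp(v,w))\big)$; convexity of the values is now automatic, since the first component is a singleton and generalized gradients are always nonempty, convex and weakly compact. You should restructure your argument along these lines: keep your estimates and your closed-graph argument (they transfer almost verbatim), but perform the fixed point iteration on $(v,w)$ rather than on $v$ alone.
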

\begin{proof}
It will be carried out in several steps.

{\bf Step~1}. 
We begin with an auxiliary elliptic quasi-variational inequality: 
find $u \in V$ such that $u \in U(u)$ and 
there is $w \in X$, $w \in \partial j(Mu,Mu)$, and
\begin{equation}\label{AUX1a}
\langle A u - f, z - u \rangle
+ \langle w, Mz - Mu \rangle_X 
+ \, \phi (z) - \phi (u) \ge 0
\ \ \mbox{\rm for all} \ \ z \in U(u).
\end{equation}
Observe that any solution to (\ref{AUX1a}) is also a solution to Problem~$\ref{QVHI}$. 
Indeed, let $u \in V$ be a solution to (\ref{AUX1a}). 
By the definition of the generalized gradient, we have
$\langle w, \xi \rangle_X \le j^0(Mu, Mu; \xi)$
for all $\xi \in X$ and
$$
\langle w, Mz - Mu \rangle_{X} \le
j^0(Mu, Mu; Mz - Mu) \ \ \mbox{for all} 
\ \, z \in U(u).
$$ 
Using the latter in the inequality (\ref{AUX1a}), 
we infer that $u \in V$ solves Problem~\ref{QVHI}.

{\bf Step~2}.
In view of Step~1, to finish the proof, it is enough 
to establish existence of solution to (\ref{AUX1a}).
To this end, let $(v, w) \in V \times X$ 
be fixed, and consider the following intermediate problem.
Find $u \in V$ such that $u \in U(v)$ and
\begin{equation}\label{INEQ1b}
\langle A u - f + M^* w, z - u \rangle 
+ \phi (z) - \phi (u) \ge 0 
\ \ \mbox{\rm for all} \ \, z \in U(v),
\end{equation}
where $M^* \colon X \to V^*$ denotes the operator adjoint to $M$.
We will prove that the inequality $(\ref{INEQ1b})$ 
has a unique solution $u \in V$ such that
\begin{equation}\label{INEQ22b}
\|u\| \le {\bar{c}}_1 + {\bar{c}}_3 \, \|w\|_X,
\end{equation}
where 
${\bar{c}}_1$, ${\bar{c}}_3$ 
are positive constants independent of $(v, w)$.
The inequality $(\ref{INEQ1b})$ is an elliptic variational inequality of the form: 
find an element $u \in K_1$ such that
\begin{equation}\label{VARIA}
\langle A u - {\widetilde{f}}, z - u \rangle 
+ \phi (z) - \phi(u) \ge 0 
\ \ \mbox{\rm for all} \ \ z \in K_1,
\end{equation}
where $K_1 = U(v)$ and 
${\widetilde{f}} = f - M^*w$.
By hypotheses, we know that $K_1$ is a nonempty, closed and convex subset of $V$ and ${\widetilde{f}} \in V^*$.
Moreover, by $H(A)$(ii), the operator $A$ is coercive 
in the following sense 
$$
\langle A v, v \rangle = 
\langle A v - A0,v \rangle 
+ \langle A 0,v \rangle \ge 
m_A \| v \|^2 + \|A 0\|_{V^*} \| v \|
\ \ \mbox{for all} \ \, v \in V.
$$
We apply~\cite[Theorem~4]{MOS30} to deduce that the problem (\ref{VARIA}), or equivalently (\ref{INEQ1b}),  
has a unique solution $u \in K_1 \subset V$.

We will show the estimate (\ref{INEQ22b}). 
We choose $0 \in U(v)$ as a test function 
in (\ref{VARIA}) to obtain 
\begin{equation}\label{equation34b}
\langle Au - A0, u \rangle 
\le 
\langle {\widetilde{f}} - A 0, u \rangle 
+ \phi(0) -\phi(u).
\end{equation} 
We take into account $H(A)$(ii) and $H(\phi)$(ii) 
to get 
\begin{equation*}
m_A \| u \|^2 \le 
\big( \| f - M^* w - A0\|_{V^*} + c_\phi \big) \| u \|.
\end{equation*}
and
\begin{equation}\label{equation36b}
m_A \| u \| \le \| f - A0 \|_{V^*} + c_\phi 
+ \| M \| \| w \|_X, 
\end{equation}
which implies (\ref{INEQ22b})
with
${\bar{c}}_1 : = m_A^{-1} \| f - A0 \|_{V^*} + c_\phi$ 
and 
${\bar{c}}_3 := m_A^{-1} \| M \|$. 
This completes the proof of Step~2.


{\bf Step~3}.
We consider a map $p \colon V \times X \to V$ 
defined by $p(v, w) = u$, where $u \in V$ is the unique solution to~(\ref{INEQ1b}) corresponding to 
$(v, w) \in V \times X$.
We shall show that 
$p$ is continuous 
from $V_w \times X_w$ to $V$.

Let $\{ v_n \} \subset V$, $\{ w_n \} \subset X$,
$v_n \rightharpoonup v$ in $V$, 
$w_n \rightharpoonup w$ in $X$, 
and $u_n = p(v_n, w_n) \in U(v_n)$. 
We prove that $u_n \to u$ in $V$ 
and $u = p(v, w) \in U(v)$.
Since $u_n \in V$ and $u_n \in U(v_n)$, 
we have 
\begin{equation}\label{equation37b}
\langle A u_n - {\widetilde{f}}_n, z - u_n \rangle 
+ \phi (z) - \phi (u_n) \ge 0 
\ \ \mbox{\rm for all} \ \ z \in U(v_n)
\end{equation}
with ${\widetilde{f}}_n := f - M^* w_n$. 
Now, taking advantage of the estimate proved in 
Step~2, we get the uniform estimate for the sequence 
of solutions $\{ u_n \}$ of the form 
\begin{equation}\label{equation38b}
m_A \| u_n \| \le \| f - A0 \|_{V^*} + c_\phi 
+ \| M \| \| w_n \|_X. 
\end{equation}
From (\ref{equation38b}), 
we see that $\{ u_n \}$ remains in a bounded set in $V$.
Moreover, since $V$ is reflexive, there exist some element
$u^* \in V$ and a subsequence of $\{ u_n \}$, 
still denoted in the same way, such that 
$u_n \rightharpoonup u^*$ in $V$. 
We can use condition $(m_1)$ in Definition~\ref{DefMosco}
of the Mosco convergence, 
and from $u_n \in U(v_n)$, $v_n \rightharpoonup v$ 
in $V$, and $H(U)$, we obtain $u^* \in U(v)$.

Next, let $z \in U(v)$.
We use condition $(m_2)$ in the Mosco convergence 
for $z \in U(v)$ and $u^* \in U(v)$ 
and we find two sequences 
$\{z_n\}$ and $\{\zeta_n\}$ with
\begin{equation}\label{Calpha1b}
z_n, \, \zeta_n \in U(v_n) \ \ \mbox{such that} 
\ \ z_n \to z 
\ \ \mbox{and} \ \ \zeta_n \to u^* \ \ \mbox{in} \ \ V, \ \mbox{as} \ n\to\infty .
\end{equation}
We choose 
$z = \zeta_n \in U(v_n)$ in (\ref{equation37b}) 
to obtain
\begin{equation}\label{CCC1b}
\langle A u_n, u_n - \zeta_n \rangle
\le \langle {\widetilde{f}}_n, u_n - \zeta_n \rangle
+\phi (\zeta_n) - \phi (u_n).
\end{equation}
Note that since $\phi$ is a continuous function, see~\cite[Theorem~5.2.8]{DMP1}, 
we have 
$\phi (\zeta_n) \to \phi (u^*)$.
From the weak lower semicontinuity of $\phi$ 
we get $\phi(u^*) \le \liminf \phi(u_n)$. 
These convergences entail
\begin{equation}\label{equation38b}
\limsup \big(
\phi (\zeta_n) - \phi(u_n) \big) 
= \lim \phi(\zeta_n) + \limsup \, (-\phi(u_n))
\le \phi(u^*) - \phi(u^*) =0.
\end{equation}
We use the convergence 	
${\widetilde{f}}_n := f - M^* w_n \to f - M^* w =: 
{\widetilde{f}}$ in $V^*$, and by (\ref{Calpha1b}), (\ref{CCC1b}) and (\ref{equation38b}), 
we get
\begin{eqnarray*}
&&\hspace{-1.2cm}
\limsup \langle A u_n , u_n - u^* \rangle 
\le 
\limsup \langle A u_n, u_n-\zeta_n \rangle
+ 
\limsup \langle Au_n, \zeta_n -u^* \rangle 
\\[2mm]
&&
\le 
\limsup
\Big(
\langle {\widetilde{f}}_n, u_n - \zeta_n \rangle +
\phi (\zeta_n) - \phi(u_n)\Big) 
+ 
\limsup \langle Au_n, \zeta_n -u^* \rangle \le 0. 
\end{eqnarray*}  
In conclusion, we have
$u_n \rightharpoonup u^*$ in $V$ and
$\limsup \langle A u_n , u_n - u^* \rangle \le 0$, 
which by the pseudomonotonicity of $A$ imply
\begin{equation}\label{equation39b}
\langle Au^*,u^*-v\rangle\le\liminf
\langle Au_n,u_n-v\rangle 
\ \ \mbox{for all} \ \ v \in V.	
\end{equation}

On the other hand, we take 
$z = z_n \in U(v_n)$ in (\ref{equation37b}) 
to obtain
\begin{equation}\label{equation40b}
\langle A u_n, u_n - z_n \rangle
\le - \langle {\widetilde{f}}_n, z_n - u_n \rangle
+\phi (z_n) - \phi (u_n).
\end{equation}
We use $H(\phi)$ and combine (\ref{equation39b}) with (\ref{equation40b}) 
to obtain
\begin{eqnarray*}
	&&\hspace{-0.5cm}
	\langle A u^* , u^* - z \rangle 
	\le \liminf \langle A u_n , u_n - z \rangle
	\le \limsup \langle A u_n , u_n - z \rangle \\[2mm]
	&&
	\hspace{-0.6cm}\quad 
	=\limsup 
	\langle A u_n , u_n - z \rangle
	+\lim \langle A u_n , z - z_n \rangle
	=\limsup \langle A u_n , u_n - z_n \rangle\\[2mm]
	&&\hspace{-0.6cm}
	\quad 
	\le - \lim \, \langle {\widetilde{f}}_n, z_n - u_n \rangle
	+ \limsup
	\big( \phi (z_n) - \phi (u_n)
	\big)
	\le - \langle {\widetilde{f}}, z-u^* \rangle +\phi(z)-\phi(u^*).
\end{eqnarray*}
In consequence, we have
$\langle A u^* - {\widetilde{f}}, u^* - z \rangle
\le \phi (z) - \phi (u^*)$ 
for all $z \in U(v)$. 
So we deduce that $u^* \in U(v)$ is a solution to the limit 
problem corresponding to (\ref{equation37b}), 
i.e., $u^* = p(v, w)$. 
The uniqueness of limit element 
$u^*$ implies that the whole sequence 
$\{ u_n \}$ convergences weakly to $u^*$ in $V$.

Subsequently, we show the strong convergence of
$\{ u_n \}$ to $u^*$ in $V$.
We can find a sequence 
$\{ \zeta_n \} \subset U(v_n)$ such that 
$\zeta_n \to u^*$ in $V$, as $n \to \infty$ (from condition $(m_2)$ of the Mosco convergence 
for $u^* \in U(v)$).
We choose $\zeta_n$ as a test function in
(\ref{equation37b}) to obtain 
$$
\langle A u_n - {\widetilde{f}}_n, 
\zeta_n - u_n \rangle
+\phi (\zeta_n) - \phi (u_n) \ge 0
\ \ \mbox{for all} \ \ n \in \nat,
$$
which implies
$$
\langle A u_n, u_n - \zeta_n \rangle
\le 
\langle {\widetilde{f}}_n, u_n - \zeta_n \rangle
+\phi (\zeta_n) - \phi (u_n).
$$
Using this inequality and $H(\phi)$, we have
\begin{eqnarray}\label{equation42b}
&&\hspace{-0.2cm}
\limsup \langle Au_n, u_n- u^* \rangle
\le 
\limsup \langle Au_n, u_n- \zeta_n \rangle 
\\[2mm]
&&
+ \limsup \langle Au_n, \zeta_n- u^* \rangle
\le 
\limsup \langle {\widetilde{f}}_n, 
u_n - \zeta_n \rangle 
\nonumber 
\\[2mm]
&&\quad
+ \limsup \, (\phi (\zeta_n) - \phi (u_n))
+ \limsup \langle Au_n, \zeta_n- u^* \rangle
\le 0. \nonumber 
\end{eqnarray}
Here, we have used the convergences 
$u_n-\zeta_n \rightharpoonup 0$ in $V$,
$\zeta_n\to u^*$ in $V$ and 
${\widetilde{f}}_n \to {\widetilde{f}}$ in $V^*$.
From $H(A)$(ii) and (\ref{equation42b}), it follows
\begin{eqnarray*}
	&&
	m_A \, \limsup \| u_n - u^* \|^2 \le
	\limsup \langle Au_n - Au^*, u_n - u^* \rangle
	\\[2mm]
	&&\quad
	\le
	\limsup \langle Au_n, u_n - u^* \rangle
	+
	\limsup \langle Au^*, u_n - u^* \rangle 
	\le 0.
\end{eqnarray*}
Hence $u_n \to u^*$ in $V$ as $n \to \infty$ 
which implies that the map $p$ is completely continuous.

\medskip

{\bf Step~4}. We shall use the fixed point argument.
We define the set-valued map $F \colon X \to 2^X$
by $F(z) := \partial j(z, z)$ for $z \in X$. 
The notation $\partial j(w, v)$ means the generalized gradient of $j(w, \cdot)$ at the point $v \in X$ for fixed $w \in X$.
We observe that for all $w$, $v \in X$ 
the set $\partial j(w, v)$ is nonempty, weakly compact, and convex in $X$, see, e.g.~\cite[Proposition~3.23(iv)]{MOSbook}. Hence, 
the values of $F$ are nonempty closed and convex in $X$.

We claim that the graph of $F$ is sequentially 
closed in $X \times X_w$. In fact, let 
$z_n \in X$, $z_n \to z$ in $X$, 
$z_n^* \in X$, $z^*_n \in F(z_n)$, and 
$z^*_n \rightharpoonup z^*$ in $X$. 
By the definition of the generalized gradient, 
we have
$\langle z^*_n, \xi \rangle \le j^0(z_n, z_n; \xi)$ 
for all $\xi \in X$.
Exploiting $H(j)$(iii), we get 
$$
\limsup \, 
\langle z^*_n, \xi \rangle \le \limsup j^0(z_n, z_n; \xi) 
\le j^0(z,z; \xi) \ \ \mbox{for all} \ \, \xi \in X,
$$
which implies $z^* \in \partial j(z, z) = F(z)$. 
Hence we get the desired closedness of the graph 
of $F$.
Moreover, it follows from $H(j)$(ii) that
$$
\| F(z) \|_X \le \| \partial j(z, z)\|_X 
\le d_1 + (d_2+d_3) \| z \|_X 
\ \ \mbox{for all} \ \, z \in X.
$$

Next, let	
\begin{equation}\label{equation44} 
D = \{ \, (v, w) \in V \times X \mid \|v \|\le r_1, \
\|w\|_X \le r_2 \, \} 
\end{equation}
with some $r_1$, $r_2 > 0$. 
We consider the set-valued map	
$\Lambda \colon D \to 2^D$ defined by
\begin{equation}\label{equation43}
\Lambda (v, w) :=
\big(
p(v, w), F(M p(v, w))
\big) = (u, F(Mu))
\ \ \mbox{for} \ \ (v, w) \in D.
\end{equation}
We establish some properties of map  $\Lambda$. 
First, we show that for suitable constants 
$r_1$, $r_2 > 0$, 
the values of the map $\Lambda$ lie in $D$.
So, we put 
\begin{equation*}
r_1 := 
\frac{C_1+ d_1 \| M\|}{m_A - (d_2+d_3) \| M\|^2}
\ \ \mbox{and} \ \ 
r_2 := d_1 + (d_2+d_3) \, \| M \| \, r_1
\end{equation*}
with  
$C_1 := \| A0 \|_{V^*} + \| f \|_{V^*} + 
c_\phi > 0$. 
Let
$\| v \| \le r_1$ and $\| w \|_X \le r_2$. 
Then, from (\ref{equation36b}), we have
\begin{eqnarray*}
	&&
	\hspace{-2.0cm}
	m_A \| u \| \le 
	C_1 + \| M \|\|w \|_X \le C_1 + \| M \| r_2  
	\\[2mm]
	&&
	\hspace{-1.5cm}
	\le
	C_1 + d_1 \| M \| + 
	\frac{(d_2+d_3)\| M \|^2 (C_1+d_1\|M\|)}
	{m_A - (d_2+d_3) \| M\|^2} = m_A \, r_1
\end{eqnarray*}
which implies $\| u \| \le r_1$. 
Further, we have
$$
\| F(Mu) \|_X \le d_1 + (d_2+d_3) \| M \|\| u \| 
\le
d_1 + (d_2+d_3) \| M \| r_1 = r_2.
$$
Hence, we have found positive constants $r_1$ and $r_2$ 
in the definition (\ref{equation44}) of the set $D$
such that 
$\Lambda (v, w) \subset D$ for all $(v, w) \in D$.
Moreover, the values of $\Lambda$ are nonempty, closed and convex sets, 
by the analogous properties of $F$.

Next, we prove that the graph of $\Lambda$
is sequentially weakly closed in $D \times D$.
Consider 
$(v_n, w_n) \in D$ such that
$(v_n, w_n) \rightharpoonup (v, w)$ in $V \times X$,
$({\bar{v}}_n, {\bar{w}}_n) \in \Lambda (v_n, w_n)$,
and
$({\bar{v}}_n, {\bar{w}}_n) 
\rightharpoonup ({\bar{v}}, {\bar{w}})$ in $V \times X$.
We show that
$({\bar{v}}, {\bar{w}}) \in \Lambda(v, w)$.
We have
\begin{equation}\label{CSSS}
{\bar{v}}_n = p(v_n, w_n) \ \ \ \mbox{and} \ \ \
{\bar{w}}_n \in F(M p(v_n, w_n)),
\end{equation}
by the definition of $\Lambda$. 
Using the continuity of the map $p$ 
and the continuity of the operator $M$, 
we obtain 
$
p(v_n, w_n) \to p(v, w)$ in $V$
and
$M p(v_n, w_n) \to M p(v, w)$ in $X$
which, together with (\ref{CSSS}) and  the closedness of the graph of $F$ in $X \times X_w$ topology, implies
\begin{equation*}
{\bar{v}} = p(v, w) \ \ \ \mbox{and} \ \ \
{\bar{w}} \in F(M p(v, w)).
\end{equation*}
Hence
$
({\bar{v}}, {\bar{w}}) \in
\big(
p(v, w), F(M p(v, w))
\big) = \Lambda (v, w)$, which proves the closedness of the graph of $\Lambda$.

Now we are in a position to apply 
the Kakutani--Ky Fan theorem with $Y = V \times X$ and the map $\Lambda$ given by (\ref{equation43}). 
In consequence, we deduce that there exists $(v^*, w^*) \in D$ 
such that $(v^*, w^*) \in \Lambda (v^*, w^*)$.
This means that $v^* = u_0$ and
$w^* \in F(M u_0)$, where
$u_0 \in V$, $u_0 \in U(u_0)$ and it satisfies
$$
\langle A u_0- f, z - u_0 \rangle
+ \phi (z) - \phi (u_0)
+ \langle M^* w^*, z - u_0 \rangle
\ge 0 
$$
for all $z \in U(u_0)$ with $w^* \in F(M u_0)$.
Hence, we conclude that $u_0 \in V$ 
is the solution to Problem~\ref{AUX1a}.
This completes the proof of the theorem.
\end{proof}

Now we investigate the dependence of the solution set to Problem~$\ref{QVHI}$ on the functions $(f, \phi)$. 
%
We consider a sequence of quasi 
variational-hemivariational inequalities: 
find $u \in V$ such that $u \in U(u)$ and
\begin{equation*}
\left\{
\begin{array}{lll}
\langle A u - f_n, z - u \rangle
+ j^0(Mu, Mu; Mz - Mu) 
\qquad\qquad \\[2mm] 
\qquad 
+ \, \phi_n (z) - \phi_n (u) \ge 0
\ \ \mbox{\rm for all} \ \ z \in U(u). 
\end{array}
\right.
\leqno{P(f_n, \phi_n)}
\end{equation*}

We need the following assumptions.

\medskip

\noindent 
$\underline{H(\phi)_1}:$ \quad 
$\phi$, $\phi_n \colon V \to \real$ is such that 

\smallskip

\lista{
	\item[(i)]
	$\phi$, $\phi_n \ \mbox{are convex and lower semicontinuous}$,  
	\smallskip 
	\item[(ii)]
	$\phi(v) \ge 0$, $\phi_n(v) \ge 0$ 
	for all $v \in V$ and $\phi(0) = \phi_n(0) = 0$, 
	\smallskip 
	\item[(iii)]
	there exists $c > 0$ such that for all $n \in \nat$,
	it holds 
	$$
	\phi (v_1) - \phi(v_2) \le c \, \| v_1 - v_2 \|, 
	\ \ 
	\phi_n(v_1) - \phi_n(v_2) \le c \, \| v_1 - v_2 \|
	\ \ \mbox{\rm for all} \ \ v_1, v_2 \in V, 
	$$
	\item[(iv)]
	$\limsup \, (\phi_n(v) - \phi_n(u_n)) \le \phi(v) - \phi(u)$ for all $u_n \rightharpoonup u$ in $V$, 
	and all $v \in V$.
}

\medskip

\noindent 
$\underline{H(f)_1}:$ \quad 
$f$, $f_n \in V^*$, $f_n \to f$ in $V^*$.

%
%

\begin{Theorem}\label{Convergence}
Let hypotheses
$H(A)$, $H(j)$, $H(M)$, $H(U)$, $H(\phi)_1$, $H(f)_1$,  
and $(\ref{SSMMAALL})$ hold, and 
%
%
let $\{ u_n \} \subset V$ be a sequence of solutions 
to problem $P(f_n, \phi_n)$.
Then, there is a subsequence of $\{ u_n \}$ 
that converges in $V$ to an element $u \in V$, 
where $u$ is a solution to problem $P(f, \phi)$.
\end{Theorem}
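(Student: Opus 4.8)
\medskip

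\noindent
The plan is to run the by-now standard limit-passage scheme for variational--hemivariational inequalities, mirroring Step~3 of the proof of Theorem~\ref{Theorem1}, but now transporting the convergence through the varying data $(f_n,\phi_n)$. First I would derive a uniform bound on $\{u_n\}$. Testing the inequality for $u_n$ with $z=0\in U(u_n)$ (admissible by $H(U)$) and using $M0=0$, together with $\phi_n(0)=0$ and $\phi_n(u_n)\ge0$ from $H(\phi)_1$(ii), gives $\langle Au_n,u_n\rangle\le\langle f_n,u_n\rangle+j^0(Mu_n,Mu_n;-Mu_n)$. Bounding the left side below by strong monotonicity $H(A)$(ii), namely $\langle Au_n,u_n\rangle\ge m_A\|u_n\|^2-\|A0\|_{V^*}\|u_n\|$, and the directional derivative above via $j^0(Mu_n,Mu_n;-Mu_n)\le\|\partial j(Mu_n,Mu_n)\|_X\|Mu_n\|_X$ with the growth $H(j)$(ii), the smallness condition $(\ref{SSMMAALL})$ absorbs the quadratic term $(d_2+d_3)\|M\|^2\|u_n\|^2$; since $f_n\to f$ keeps $\|f_n\|_{V^*}$ bounded, this yields $\sup_n\|u_n\|<\infty$. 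By reflexivity I pass to a subsequence with $u_n\rightharpoonup u$ in $V$, and compactness of $M$ ($H(M)$) gives $Mu_n\to Mu$ in $X$. From $u_n\in U(u_n)$, $u_n\rightharpoonup u$, and $U(u_n)\stackrel{M}{\longrightarrow}U(u)$ by $H(U)$, condition $(m_1)$ of Definition~\ref{DefMosco} forces $u\in U(u)$.

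\noindent
The core step is to establish $\limsup\langle Au_n,u_n-u\rangle\le0$. Using $(m_2)$ for $u\in U(u)$ I would select a recovery sequence $\zeta_n\in U(u_n)$ with $\zeta_n\to u$ in $V$, insert $z=\zeta_n$ into the inequality for $u_n$, and control the resulting terms: $\langle f_n,u_n-\zeta_n\rangle\to0$ because $f_n\to f$ strongly while $u_n-\zeta_n\rightharpoonup0$; the term $j^0(Mu_n,Mu_n;M\zeta_n-Mu_n)$ has $\limsup$ at most $j^0(Mu,Mu;0)=0$ by the upper semicontinuity $H(j)$(iii), since $Mu_n\to Mu$ and $M\zeta_n-Mu_n\to0$ in $X$; and $\phi_n(\zeta_n)-\phi_n(u_n)$, which I split as $(\phi_n(\zeta_n)-\phi_n(u))+(\phi_n(u)-\phi_n(u_n))$, where the first bracket tends to $0$ by the uniform Lipschitz bound $H(\phi)_1$(iii) with $\zeta_n\to u$, and the second has $\limsup\le0$ by $H(\phi)_1$(iv) taken at $v=u$. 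This $\phi_n$ term is the main obstacle, precisely because $\phi_n$ is simultaneously varying and evaluated at the moving argument $\zeta_n$, so that neither the Lipschitz estimate nor the Mosco-type condition $H(\phi)_1$(iv) suffices in isolation; they must be combined after the splitting. Adding $\langle Au_n,\zeta_n-u\rangle\to0$ (valid since $A$ is bounded, being pseudomonotone, and $\zeta_n\to u$ strongly) delivers $\limsup\langle Au_n,u_n-u\rangle\le0$.

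\noindent
Then the pseudomonotonicity of $A$, in the equivalent reflexive form recalled in Section~\ref{Section1}, yields $\lim\langle Au_n,u_n-u\rangle=0$ together with $Au_n\rightharpoonup Au$ in $V^*$. Strong monotonicity upgrades this to strong convergence, since $m_A\|u_n-u\|^2\le\langle Au_n-Au,u_n-u\rangle=\langle Au_n,u_n-u\rangle-\langle Au,u_n-u\rangle\to0$, whence $u_n\to u$ in $V$. Finally, for an arbitrary $z\in U(u)$ I would take a recovery sequence $z_n\in U(u_n)$ with $z_n\to z$, test the inequality for $u_n$ with $z_n$, and pass to the $\limsup$: the linear part converges to $\langle Au-f,z-u\rangle$ using $Au_n\rightharpoonup Au$, $f_n\to f$ and $z_n-u_n\to z-u$; the hemivariational part satisfies $\limsup j^0(Mu_n,Mu_n;Mz_n-Mu_n)\le j^0(Mu,Mu;Mz-Mu)$ by $H(j)$(iii); and the convex part obeys $\limsup(\phi_n(z_n)-\phi_n(u_n))\le\phi(z)-\phi(u)$ by the same split as above combined with $H(\phi)_1$(iv). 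Since each pre-limit inequality is nonnegative, subadditivity of $\limsup$ gives $\langle Au-f,z-u\rangle+j^0(Mu,Mu;Mz-Mu)+\phi(z)-\phi(u)\ge0$ for all $z\in U(u)$, which together with $u\in U(u)$ shows that $u$ solves $P(f,\phi)$ and completes the argument.
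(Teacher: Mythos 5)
Your proof is correct and follows essentially the same route as the paper's: the zero test function combined with the smallness condition $(\ref{SSMMAALL})$ for the a priori bound, Mosco recovery sequences from $(m_2)$, the same splitting of the $\phi_n$-term through the intermediate point $u$ (combining the uniform Lipschitz bound $H(\phi)_1$(iii) with $H(\phi)_1$(iv)), upper semicontinuity of $j^0$ together with compactness of $M$, and pseudomonotonicity plus strong monotonicity of $A$. The only (harmless) difference is one of ordering: you establish the strong convergence $u_n\to u$ before the final limit passage and then use $Au_n\rightharpoonup Au$ directly, whereas the paper first identifies the limit via the $\liminf$ inequality supplied by pseudomonotonicity and proves the strong convergence last.
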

\begin{proof} 
Let $\{ u_n \} \subset V$ be a sequence of solution to problem $P(f_n, \phi_n)$. 
We claim that $\{ u_n \}$ belongs to a bounded set 
in $V$ independently of $n\in \nat$.
In fact, we take $0 \in U(u_n)$ as a test element 
in $P(f_n, \phi_n)$ and get
\begin{equation*}
\langle A u_n - A0, u_n \rangle
\le j^0(Mu_n, Mu_n; - Mu_n) + 
\phi_n (0) - \phi_n (u_n) + 
\langle f_n -A0, u_n \rangle,
\end{equation*}
which immediately, by $H(\phi)_1$(ii) and $H(A)$(ii),  implies 
$$
m_A \, \| u_n \|^2 \le 
j^0(Mu_n, Mu_n; - Mu_n) +
\| f_n - A0 \|_{V^*} \, \| u_n \|.
$$
Using $H(j)$(iii) 
and~\cite[Proposition~3.23(iii)]{MOSbook}, we obtain 
\begin{eqnarray*}
&&
j^0(Mu_n, Mu_n; - Mu_n)
\le \| \partial j_n (Mu_n, Mu_n)\| \| M \| \| u_n \| 
\\ [2mm]
&&\qquad 
\le d_1 \| M \| \| u_n \| +
(d_2 + d_3) \| M \|^2 \|u_n \|^2
\end{eqnarray*}
which, by (\ref{SSMMAALL}), entails
\begin{equation*}
\| u_n \| \le 
\frac{\| f_n - A0\|_{V^*} + d_1 \| M \|}
{m_A - (d_2 + d_3) \| M \|^2} =: L_n.
\end{equation*}
Recalling hypothesis $H(f)_1$, we know that $L_n$ is uniformly bounded by a constant independent of $n$, 
thus $\{ u_n \}$ is uniformly bounded in $V$ as claimed.  
We may assume that there exists 
$u^* \in V$ such that, at least for a subsequence, 
we have
\begin{equation}\label{weak44}
u_n \rightharpoonup u^* \ \ \mbox{in} \ \ V.
\end{equation} 

Next, we apply an argument from Step~2 of Theorem~\ref{Theorem1} and pass to the limit in 
$P(f_n, \phi_n)$. From (\ref{weak44}) 
and $H(U)$, we easily deduce that $u^* \in U(u^*)$.
Let $z \in U(u^*)$. Using the Mosco condition  $(m_2)$ 
in $H(U)$, we are able to find 
$\{\eta_n\}$, $\{z_n\} \subset U(u_n)$ such that
\begin{equation}\label{Approx1}
\eta_n \to u^* 
\ \ \mbox{and} \ \ z_n \to z \ \ \mbox{in} \ \ V, \ \mbox{as} \ n\to\infty .
\end{equation}
We select $\eta_n$ as a test element 
in $P(f_n, \phi_n)$ to obtain
\begin{equation}\label{AAA1b}
\langle A u_n, u_n -\eta_n \rangle 
\le 
\langle - f_n, \eta_n - u_n \rangle
+ j^0 (M u_n, M u_n; M \eta_n - M u_n) 
+\phi_n (\eta_n) - \phi_n (u_n).
\end{equation}
From hypothesis $H(\phi)_1$(iv) and (\ref{Approx1}), 
we get
\begin{equation}\label{AAA3b}
\limsup \, (\phi_n (\eta_n) - \phi_n(u_n)) 
\le \lim (\phi_n (\eta_n) - \phi_n(u^*)) 
+ \limsup (\phi_n (u^*) - \phi_n(u_n)) \le 0.
\end{equation}
Next, we use $H(f)_1$, $H(j)$(iii), (\ref{Approx1}), 
(\ref{AAA1b}), (\ref{AAA3b}) and 
the convergence 
$M u_n \to M u^*$ in $X$ 
to deduce 
\begin{eqnarray*}
&&\hspace{-1.2cm}
\limsup \langle A u_n , u_n - u^* \rangle 
=
\limsup \langle A u_n, u_n-\eta_n \rangle
+ 
\limsup \langle Au_n, \eta_n -u^* \rangle 
\\[2mm]
&&
\hspace{-0.6cm}
\le 
\lim \langle f_n, u_n-\eta_n \rangle
+ \limsup  j^0 (M u_n, M u_n; M \eta_n - M u_n)
\\[2mm]
&&
+ \limsup \,  (\phi_n (\eta_n) - \phi_n (u_n))
+ \limsup \langle Au_n, \eta_n -u^* \rangle \le 0. 
\end{eqnarray*}  
We are now in a position to use 
$u_n \rightharpoonup u^*$ in $V$, 
$\limsup \langle A u_n , u_n - u^* \rangle \le 0$, 
and the pseudomonotonicity of $A$ to have
\begin{equation}\label{AAA4b}
\langle Au^*,u^*-v\rangle\le\liminf 
\langle Au_n,u_n-v\rangle
\ \ \mbox{for all} \ \ v\in V.	
\end{equation}
Next, we choose 
$z_n \in U(u_n)$ as a test function 
in $P(f_n, \phi_n)$ to obtain
\begin{equation}\label{AAA5b}
\langle A u_n, u_n - z_n \rangle 
\le 
\langle - f_n, z_n - u_n \rangle
+ j^0 (M u_n, M u_n; M z_n - M u_n) 
+\phi_n (z_n) - \phi_n (u_n).
\end{equation}
Combining (\ref{Approx1}) and (\ref{AAA3b})--(\ref{AAA5b}), 
it follows  
\begin{eqnarray*}
&&\hspace{-1.0cm}
\langle A u^* , u^* - z \rangle 
\le \liminf \langle A u_n , u_n - z \rangle
\le \limsup \langle A u_n , u_n - z \rangle \\[2mm]
&&
\hspace{-1.0cm}\qquad 
= \limsup 
\langle A u_n , u_n - z \rangle
+ \lim \langle A u_n , z - z_n \rangle
= \limsup \langle A u_n , u_n - z_n \rangle\\[2mm]
&&
\quad 
\le \lim \langle - f_n, z_n - u_n \rangle
+ \limsup
j^0 (M u_n, M u_n; M z_n - M u_n) \\[2mm] 
&&\qquad 
+ \limsup \, (\phi_n (z_n) - \phi_n (u_n))\\[2mm]
&&\qquad \quad 
\le \langle - f, z-u^* \rangle
+ j^0 (M u, M u; M z - M u) + \phi (z) - \phi (u^*)
\end{eqnarray*}
for all $z \in U(u^*)$.
In consequence, we deduce that $u^* \in U(u^*)$ is a solution to $P(f, \phi)$.

To conclude the proof, we show the strong convergence 
$u_n \to u^*$ in $V$.
Similarly as before, 
we use condition $(m_2)$ of the Mosco convergence 
for $u^* \in U(u^*)$ to find a sequence 
$\{ \eta_n \} \subset U(u_n)$ such that 
$\eta_n \to u^*$ in $V$, as $n \to \infty$.
We choose $\eta_n$ as a test function in
$P(f_n, \phi_n)$ to get 
$$
\langle A u_n, u_n -\eta_n \rangle 
\le 
\langle - f_n, \eta_n - u_n \rangle
+ j^0 (M u_n, M u_n; M \eta_n - M u_n) 
+\phi_n (\eta_n) - \phi_n (u_n)
$$ 
for all $n \in \nat$. 
Exploiting $H(f)_1$, $H(\phi)_1$(iv) and 
the convergences 
$u_n-\eta_n \rightharpoonup 0$ in $V$, 
$\eta_n\to u^*$ in $V$, we have
\begin{eqnarray}\label{equation42b}
&&\hspace{-0.2cm}
\limsup \langle Au_n, u_n- u^* \rangle
\le \limsup \langle Au_n, u_n - \eta_n \rangle 
+ \limsup \langle Au_n, \eta_n- u^* \rangle
\\ [2mm]
&&\quad 
\le 
\lim \langle - f_n, \eta_n - u_n \rangle
+ \limsup j^0 (M u_n, M u_n; M \eta_n - M u_n) 
\nonumber \\[2mm]
&&\qquad
+ \limsup\, (\phi_n (\eta_n) - \phi_n (u_n))
+ \limsup \langle Au_n, \eta_n- u^* \rangle
\le 0. \nonumber 
\end{eqnarray} 
Finally, by $H(A)$(ii) and (\ref{equation42b}), 
we have 
\begin{eqnarray*}
&&
m_A \, \limsup \| u_n - u^* \|^2 \le
\limsup \langle Au_n - Au^*, u_n - u^* \rangle
\\[2mm]
&&\quad
\le
\limsup \langle A u_n, u_n - u^* \rangle
+
\limsup \langle A u^*, u^* - u_n \rangle 
\le 0
\end{eqnarray*}
which entails the strong convergence of $u_n$ to 
$u^*$ in $V$.
This completes the proof of the theorem.
\end{proof}

Choosing constant sequences $\phi_n = \phi$ 
and $f_n = f$ for all $n \in \nat$ in Theorem~\ref{Convergence}, and
using the arguments of that theorem, 
we obtain the following compactness result.
\begin{Corollary}\label{C1}
Under hypotheses of Theorem~$\ref{Theorem1}$, 
the solution set of Problem~$\ref{QVHI}$ 
is compact in $V$.
\end{Corollary}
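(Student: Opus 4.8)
The plan is to establish the \emph{sequential} compactness of the solution set $\mathcal{S}$ of Problem~\ref{QVHI} directly, by replaying the limiting argument of Theorem~\ref{Convergence} with the constant choices $f_n = f$ and $\phi_n = \phi$. Nonemptiness of $\mathcal{S}$ is already granted by Theorem~\ref{Theorem1}. The point I would emphasize at the outset is that one cannot literally quote Theorem~\ref{Convergence}, because that theorem is stated under $H(\phi)_1$ (in particular the normalization $\phi(0)=0$ and the sign condition $\phi\ge 0$), whereas the hypotheses of Theorem~\ref{Theorem1} provide only $H(\phi)$. The remedy is that the single place where non-negativity was used in the proof of Theorem~\ref{Convergence} is the a priori bound, and there the Lipschitz estimate $H(\phi)$(ii) is a perfectly adequate substitute.

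Concretely, let $\{u_n\}\subseteq\mathcal{S}$ be arbitrary, so each $u_n$ satisfies $u_n\in U(u_n)$ and the inequality of Problem~\ref{QVHI}. Testing with $0\in U(u_n)$ (admissible by $H(U)$) yields
$$
\langle Au_n - A0, u_n\rangle \le j^0(Mu_n, Mu_n; -Mu_n) + \phi(0) - \phi(u_n) + \langle f - A0, u_n\rangle.
$$
I would bound the right-hand side by combining $H(j)$(iii) with \cite[Proposition~3.23(iii)]{MOSbook}, which gives $j^0(Mu_n,Mu_n;-Mu_n)\le d_1\|M\|\,\|u_n\| + (d_2+d_3)\|M\|^2\|u_n\|^2$, together with the Lipschitz bound $\phi(0)-\phi(u_n)\le c_\phi\|u_n\|$ from $H(\phi)$(ii), while $H(A)$(ii) bounds the left side below by $m_A\|u_n\|^2$. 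Dividing by $\|u_n\|$ and invoking the smallness condition $(\ref{SSMMAALL})$ produces the estimate $\|u_n\|\le (d_1\|M\| + c_\phi + \|f-A0\|_{V^*})/(m_A-(d_2+d_3)\|M\|^2)$, which is independent of $n$. Hence $\{u_n\}$ is bounded, and by reflexivity we may pass to a subsequence with $u_n\rightharpoonup u^*$ in $V$.

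The limit passage then follows the proof of Theorem~\ref{Convergence} verbatim. From $u_n\in U(u_n)$, $u_n\rightharpoonup u^*$ and the Mosco condition $(m_1)$ in $H(U)$ (using $U(u_n)\stackrel{M}{\longrightarrow}U(u^*)$) one obtains $u^*\in U(u^*)$. For fixed $z\in U(u^*)$, condition $(m_2)$ supplies $\eta_n,z_n\in U(u_n)$ with $\eta_n\to u^*$ and $z_n\to z$ in $V$; testing with $\eta_n$ and exploiting $H(j)$(iii), the compactness of $M$ (so $Mu_n\to Mu^*$ in $X$), and the weak lower semicontinuity of the convex function $\phi$ yields $\limsup\langle Au_n,u_n-u^*\rangle\le 0$. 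Pseudomonotonicity of $A$ from $H(A)$(i) then gives the liminf inequality $\langle Au^*,u^*-v\rangle\le\liminf\langle Au_n,u_n-v\rangle$, and testing with $z_n$ followed by the limit passage shows that $u^*$ satisfies the inequality of Problem~\ref{QVHI} for every $z\in U(u^*)$; thus $u^*\in\mathcal{S}$.

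To upgrade weak to strong convergence I would once more test with $\eta_n\in U(u_n)$, $\eta_n\to u^*$, to rederive $\limsup\langle Au_n,u_n-u^*\rangle\le 0$, and then invoke the strong monotonicity $H(A)$(ii) through
$$
m_A\limsup\|u_n-u^*\|^2 \le \limsup\langle Au_n-Au^*,u_n-u^*\rangle \le 0,
$$
forcing $u_n\to u^*$ in $V$. Since every sequence in $\mathcal{S}$ admits a subsequence converging strongly to a point of $\mathcal{S}$, the set $\mathcal{S}$ is sequentially compact, hence compact in the metric space $V$. The only genuine obstacle is the hypothesis mismatch identified in the first paragraph; once it is bypassed via the Lipschitz a priori bound, nothing beyond the machinery already deployed in Theorem~\ref{Convergence} is needed.
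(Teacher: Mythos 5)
Your proposal is correct and follows essentially the same route as the paper, which proves the corollary precisely by taking $\phi_n=\phi$ and $f_n=f$ and rerunning the arguments of Theorem~\ref{Convergence}. Your observation that $H(\phi)_1$ (in particular $\phi\ge 0$, $\phi(0)=0$) is not literally available under the hypotheses of Theorem~\ref{Theorem1}, and that the Lipschitz bound $H(\phi)$(ii) substitutes for it in the a priori estimate, is a valid and worthwhile refinement of the paper's one-line justification.
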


We complete this section with the following comments. 

(1) 
Under $H(j)$(i) and (ii), condition $H(j)$(iii) 
means that the generalized gradient operator 
$\partial j \colon X \times X \to 2^X$ 
has a graph closed in $X \times X \times X_w$ topology.
If $j$ is independent of the first argument, condition $H(j)$(iii) is automatically satisfied,  see~\cite[Proposition~3.23(ii)]{MOSbook}.

(2) 
Further, if, in addition to $H(j)$, the potential  
$j(w, \cdot)$ is supposed to be convex, then 
the inequality in Problem~\ref{QVHI} reduces to the following elliptic quasi-variational inequality of the second kind:
find $u \in U(u)$ such that 
\begin{equation*}
\langle A u - f, z - u \rangle
+ \psi(z) - \psi(u) \ge 0
\ \ \mbox{\rm for all} \ \ z \in U(u),
\end{equation*}
where $\psi (z) = j(Mz) + \phi(z)$ for $z \in V$.

(3) 		
In hypothesis $H(j)$ we do not require 
the so-called relaxed monotonicity condition 
of the generalized gradient, extensively used in the 
literature for hemivariational inequalities, see~\cite{MOSbook,MOS30}. In this paper the relaxed 
monotonicity condition is used only in the proof 
of uniqueness of solution.

\section{Well posedness of the Stokes problem}
\label{Section888}

We provide results on existence, uniqueness, and continuous dependence of solution to the inequality in Problem~$\ref{PV}$ on the data. 
To this aim, we apply Theorem~\ref{Theorem1} and Corollary~\ref{C1}.
\begin{Theorem}\label{Theorem1a}
Under the hypotheses $H(T)$, 
$H(f, g)$, $H(h)$, $H(j_\tau)$,  
$H(k, r)$ 	
and the small\-ness condition
\begin{equation}\label{small}
\sqrt{2} \, b_1 \, h_1 \, \| \gamma \|^2 < m_T, 
\end{equation} 
the set of solutions to Problem~$\ref{PV}$ is nonempty 
and compact in $V$.
\end{Theorem}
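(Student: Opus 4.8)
The plan is to recognize Problem~\ref{PV} as a concrete realization of the abstract Problem~\ref{QVHI} and then to invoke Theorem~\ref{Theorem1} for existence together with Corollary~\ref{C1} for compactness. With $V$ as in (\ref{definition:V}) I would take the Hilbert space $X = L^2(\Gamma_1; \real^d)$ and let $M = \gamma|_{\Gamma_1}$ be the tangential trace (recall that $u_\nu = 0$ on $\Gamma_1$, so $M\bu = \bu_\tau$). I define $A \colon V \to V^*$ by $\langle A\bu, \bv \rangle = \int_\Omega \mathbb{T}(\bx, \mathbb{D}\bu) : \mathbb{D}\bv\, dx$, the convex potential $\phi(\bv) = \int_\Omega g\, \| \mathbb{D}\bv \|\, dx$, the coupled boundary integrand $j(w, v) = \int_{\Gamma_1} h(\bx, w(\bx))\, j_\tau(\bx, v(\bx))\, d\Gamma$ on $X \times X$, the functional $f \in V^*$ by $\langle f, \bv \rangle = \int_\Omega \fb \cdot \bv\, dx$, and the constraint map $U = K$ of (\ref{DEFU}). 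Once one checks the identity $j^0(M\bu, M\bu; M\bv - M\bu) = \int_{\Gamma_1} h(\bu_\tau)\, j_\tau^0(\bu_\tau; \bv_\tau - \bu_\tau)\, d\Gamma$, the abstract inequality reduces exactly to Problem~\ref{PV}.

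The hypotheses $H(A)$, $H(M)$, $H(\phi)$, $H(f)$ are verified directly. The operator $A$ is bounded by $H(T)$(iii) and demicontinuous by $H(T)$(i)--(ii) through continuity of the associated Nemytskii operator; by $H(T)$(iv) and the Korn equivalence $\|\bv\|_V = \|\mathbb{D}\bv\|_{L^2}$ it is strongly monotone with $m_A = m_T$, hence pseudomonotone, so $H(A)$ holds. Hypothesis $H(M)$ is precisely (\ref{tracecompact}), with $\|M\| \le \|\gamma\|$. For $H(\phi)$, convexity and continuity of $\bv \mapsto \int_\Omega g\,\|\mathbb{D}\bv\|\, dx$ are immediate, while the estimate $\phi(\bv_1) - \phi(\bv_2) \le \int_\Omega g\,\|\mathbb{D}(\bv_1 - \bv_2)\|\, dx \le \|g\|_{L^2}\, \|\bv_1 - \bv_2\|_V$ gives $H(\phi)$(ii); and $H(f)$ follows from $\fb \in L^2(\Omega; \real^d)$.

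The two substantive steps are $H(U)$ and $H(j)$. For $H(U)$, the set $K(\bv)$ is convex ($k$ subadditive and positively homogeneous is sublinear, hence convex), weakly closed (weak lower semicontinuity of $k$), and nonempty since $k(0) = 0 < r(\bv)$ forces $0 \in K(\bv)$. Given $v_n \rightharpoonup v$, condition $(m_1)$ follows from $k(z) \le \liminf k(z_n) \le \lim r(v_n) = r(v)$, using weak lower semicontinuity of $k$ and weak continuity of $r$; condition $(m_2)$ is obtained by the scaling $z_n = t_n z$ with $t_n = \min\{1, r(v_n)/k(z)\}$ (and $z_n = z$ if $k(z) = 0$), so that positive homogeneity gives $k(z_n) = t_n k(z) \le r(v_n)$ and $t_n \to 1$ yields $z_n \to z$ in $V$. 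For $H(j)$, the regularity $H(j_\tau)$(iv) together with the theorem on generalized gradients of integral functionals provides the inclusion $\partial j(w,v) \subseteq \{\, \zeta \in X \mid \zeta(\bx) \in h(\bx, w(\bx))\, \partial j_\tau(\bx, v(\bx)) \,\}$ as well as the upper semicontinuity required in $H(j)$(iii). The growth bound is where the constant $\sqrt{2}$ arises: from $H(j_\tau)$(iii) and $0 \le h \le h_1$ one has $\|\zeta(\bx)\| \le h_1(b_0(\bx) + b_1 \|v(\bx)\|)$, whence, by $(a+b)^2 \le 2a^2 + 2b^2$ and integration, $\|\partial j(w,v)\|_X \le \sqrt{2}\, h_1 \|b_0\|_{L^2(\Gamma_1)} + \sqrt{2}\, h_1 b_1 \|v\|_X$, i.e. $d_1 = \sqrt{2}\, h_1 \|b_0\|_{L^2(\Gamma_1)}$, $d_2 = 0$, $d_3 = \sqrt{2}\, h_1 b_1$. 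The abstract smallness condition (\ref{SSMMAALL}) then reads $(d_2 + d_3)\, \|M\|^2 = \sqrt{2}\, h_1 b_1 \|M\|^2 \le \sqrt{2}\, h_1 b_1 \|\gamma\|^2 < m_T = m_A$, which is exactly (\ref{small}).

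With all hypotheses of Theorem~\ref{Theorem1} and condition (\ref{SSMMAALL}) in force, Theorem~\ref{Theorem1} yields the existence of a solution and Corollary~\ref{C1} the compactness of the solution set in $V$. I expect the main obstacle to be $H(j)$: one must justify that the Clarke directional derivative of the boundary integral coupling $h$ and $j_\tau$ factors as the integral of the pointwise directional derivatives, so that the reduction to Problem~\ref{QVHI} is an \emph{equality} rather than a one-sided inequality. This is exactly where the regularity assumption $H(j_\tau)$(iv) is indispensable, and it is tied to the upper semicontinuity $H(j)$(iii). The Mosco continuity of $U$ through the recovery-sequence construction is the other delicate point, though more routine.
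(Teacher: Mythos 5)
Your proposal is correct and follows essentially the same route as the paper: the same choice of $X = L^2(\Gamma_1;\real^d)$, the same operators $A$, $J$, $\phi$, $M$, the same verification of $H(A)$, $H(\phi)$, $H(j)$ (with the identical constants $d_1 = \sqrt{2}\,h_1\|b_0\|_{L^2(\Gamma_1)}$, $d_2=0$, $d_3 = \sqrt{2}\,h_1 b_1$ via the Aubin--Clarke formula and regularity), the same Mosco-continuity argument for $K$ (the paper uses the recovery sequence $\bv_n = \tfrac{r(\bu_n)}{r(\bu)}\bv$, a harmless variant of your truncated scaling), and the same conclusion via Theorem~\ref{Theorem1} and Corollary~\ref{C1}. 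You also correctly identify the one point the paper must (and does) address explicitly, namely that regularity of $j_\tau$ upgrades the subadditivity of $J^0$ to the exact integral representation, so the abstract inequality coincides with Problem~\ref{PV}.
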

\begin{proof} 
Let $X = L^2(\Gamma_1;\real^d)$. 
We introduce the following operators and functions
defined by
\begin{eqnarray}
&&\hspace{-1.2cm}
A \colon V \to V^*, \ \ 
\langle A\bu, \bv \rangle = \int_{\Omega}  
{\mathbb{T}}({\mathbb{D}}\bu) : 
{\mathbb{D}} \bv \, dx, 
\ \ \bu, \bv \in V, \label{OPERA} 
\\
&&\hspace{-1.2cm}
J \colon X \times X \to \real, \ \ 
J(\bw, \bu) = \int_{\Gamma_1} h(\bw) j_\tau(\bu)\, d\Gamma, 
\ \ \bw, \bu \in X, \label{OPERJ} \\
&&\hspace{-1.2cm}
\phi \colon V \to \real, \ \ 
\phi(\bv) = 
\int_\Omega g \, \| \mathbb{D} \bv \| \, dx,
\ \ \bv \in V, \label{OPERFI1}\\
&&\hspace{-1.2cm}
\fb_1 \in V^*, 
\ \ 
\langle \fb_1, \bv \rangle = \int_\Omega \fb \cdot \bv \, dx, \ \ \bv \in V, \label{OPERFF} \\[1mm]
&&\hspace{-1.2cm}
M \colon V \to X, \ \ 
M \bv = \bv_\tau, \ \  \bv \in V. \label{OPERM} 
\end{eqnarray}
	
\noindent
We consider the auxiliary variational-hemiva\-ria\-tio\-nal inequality: 
find $\bu \in V$ such that $\bu \in K(\bu)$ and 
\begin{equation}\label{AUX}
\langle A\bu - \fb_1, \bv - \bu \rangle
+ J^0(M\bu, M\bu; M\bv-M\bu) 
+ \phi (\bv) - \phi(\bu) 
\ge 0
\end{equation}
for all $\bv \in K(\bu)$. 
%
We shall verify the hypotheses of Theorem~\ref{Theorem1}.
First we establish hypothesis $H(A)$.
We can use $H(T)$(iii) and the H\"older inequality to obtain
\begin{equation*}
|\int_{\Omega} {\mathbb{T}}({\mathbb{D}}\bu) : 
{\mathbb{D}} \bv \, dx |  
\le 
\sqrt{2} \left(
\| a_0 \|_{L^2(\Omega)}+
a_1 \|\bu \|_{L^2(\Omega;{\mathbb{M}}^d)}
\right) \| \bv \|
\end{equation*} 
for all $\bu$, $\bv \in V$. Hence
$\| A \bu \|_{V^*} \le \sqrt{2} \left(
\| a_0 \|_{L^2(\Omega)} + a_1 \|\bu \| \right)$  
which implies that $A$ is a bounded operator.
From hypothesis $H(T)$(iv), we see that $A$ is a strongly monotone operator 
with constant $m_A=m_T$ as a consequence of 
the inequality 
	\begin{eqnarray*}
		&&\langle A \bv_1 - A \bv_2, \bv_1-\bv_2 \rangle = 
		\int_{\Omega} \left(
		{\mathbb{T}}({\mathbb{D}}\bv_1) 
		- {\mathbb{T}}({\mathbb{D}}\bv_2) \right) : 
		{\mathbb{D}} (\bv_1 - \bv_2) \, dx \\
		&&\qquad 
		\ge m_T \, \int_{\Omega} 
		\| {\mathbb{D}} (\bv_1-\bv_2) \|^2 \, dx = 
		m_T \, \| \bv_1 - \bv_2 \|^2 
		\ \ \mbox{for all} \ \ \bv_1, \bv_2 \in V.
	\end{eqnarray*} 
	Applying~\cite[Theorem~1.5.2]{DMP2} 
	(Krasnoselskii's theorem for the Nemytskii operators)
	together with $H(T)$,   
	we deduce that $A$ is continuous 
	from $V$ to $V^*$. 
	Since the operator $A$ is bounded, monotone and 
	hemicontinuous (being continuous), by~\cite[Theorem~3.69(i)]{MOSbook}, 
	we conclude that $A$ is pseudomonotone, 
	i.e., $H(A)$ holds.
	
We shall check that the function $\phi$ in (\ref{OPERFI1}) satisfies hypothesis $H(\phi)$.
It is obvious that $\phi$ is a convex function.
Hence, it is bounded from below by an affine function 
which combined with the Fatou lemma implies, 
by a standard argument, that it is also lower semicontinuous on $V$ for the strong topology. 
Thus, $H(\phi)$(i) holds.  
Further, by the H\"older inequality and $H(f, g)$, 
we have
\begin{equation*}
	\phi(\bv_1)-\phi (\bv_2) = 
	\int_{\Omega} g \, (\| {\mathbb{D}}\bv_1\|
	- \| {\mathbb{D}}\bv_2\| ) \, dx 
	\le 
	\int_{\Omega} g \, \| {\mathbb{D}} (\bv_1 - \bv_2) \| \, dx 
	\le \| g \|_{L^2(\Omega)} 
	\| \bv_1 - \bv_2 \|
\end{equation*}
for all $\bv_1$, $\bv_2 \in V$, which implies $H(\phi)$(ii).
	
Now, we show that the functional $J$ given by (\ref{OPERJ}) satisfies $H(j)$. 
We use hypotheses $H(j_\tau)$(i)--(iii), $H(h)$, and~\cite[Theorem~5.6.39]{DMP1}, 
to deduce that $J(\bw, \cdot)$ is Lipschitz on every bounded set for all $\bw \in X$, 
which clearly implies condition $H(j)$(i). 
Based on hypothesis $H(j_\tau)$(iv) and~\cite[Theorem~3.47(v), (vii)]{MOSbook}, 
we have
\begin{equation}\label{RRRR}
\partial J(\bw, \bu) = \int_{\Gamma_1}
h(\bw) \partial j_\tau(\bx, \bu(\bx))) \, d\Gamma
\ \ \mbox{for all} \ \ \bw, \bu \in X.
\end{equation}
Let $\bw$, $\bu \in X$ and $\bu^* \in X^*$,  
$\bu^* \in \partial J(\bw, \bu)$.
Hence, 
$\bu^*(\bx) \in h(\bw(\bx)) 
\partial j_\tau(\bx, \bu (\bx))$
for a.e. $\bx \in \Gamma_1$. 
From $H(j_\tau)$(iii) and $H(h)$(iii), we have 
$$
\| \bu^*(\bx)\|^2 \le 
2 \, h_1^2 \, 
(b_0^2(\bx) + b_1^2 \, \| \bu (\bx)\|^2)
\ \ \mbox{for a.e.} \ \ \bx \in \Gamma_1.
$$
Integrating the last inequality on $\Gamma_1$, we obtain
$\| \bu^* \|_{X^*} \le d_1 + d_3 \| \bu \|_X$, 
where 
$d_1 = 2^{1/2} h_1 \|b_0\|_{L^2(\Gamma_1)}$ 
and $d_3 = 2^{1/2}h_1 b_1$.
We infer that the hypothesis $H(j)$(ii) is satisfied with constants $d_1$, $d_2 = 0$, and $d_3$.
	
We shall verify the property $H(j)$(iii). 
Let $\bw$, $\bv$, $\bz \in X$, 
$\bw_n \to \bw$ in $X$, $\bv_n \to \bv$ in $X$, 
and $\bz_n \to \bz$ in $X$. 
From~\cite[Theorem~2.39]{MOSbook}, by passing to a subsequence if necessary, we may suppose  
$$
\bw_n(\bx) \to \bw(\bx), \ \ 
\bv_n(\bx) \to \bv(\bx), \ \
\bz_n(\bx) \to \bz(\bx) \ \ \mbox{in} \ \ \real^d, 
\ \mbox{a.e.} \ \bx \in \Gamma_1
$$
and 
$\| \bw_n (\bx) \|_{\real^d} \le w_0(\bx)$, 
$\| \bv_n(\bx)\|_{\real^d} \le v_0(\bx)$, 
$\| \bz_n (\bx) \|_{\real^d} \le z_0(\bx)$
a.e. on $\Gamma_1$ with $w_0$, $v_0$, $z_0\in L^2(\Gamma_1)$. 
We use the continuity of $h(\bx, \cdot)$ for a.e. $\bx \in \Gamma_1$ and the upper semicontinuity of 
$j_\tau^0(\bx, \cdot; \cdot)$ for 
a.e. $\bx \in \Gamma_1$, see~\cite[Proposition~3.23(ii)]{MOSbook},
to obtain 
\begin{eqnarray*}
&&\hspace{-0.7cm}
\limsup h(\bw_n(\bx))
j_\tau^0(\bx, \bv_n(\bx); \bz_n(\bx))
\le
\limsup h(\bw(\bx))
j_\tau^0(\bx, \bv_n(\bx); \bz_n(\bx)) 
\\[1mm]
&&
+  \limsup (h(\bw_n(\bx))-h(\bw(\bx)))
j_\tau^0(\bx, \bv_n(\bx); \bz_n(\bx))
\le 
h(\bw(\bx))j_\tau^0(\bx, \bv(\bx); \bz(\bx))
\end{eqnarray*} 
for a.e. $\bx \in \Gamma_1$. 
We apply the Fatou lemma,
the regularity hypothesis $H(j_\tau)$(iv) and~\cite[Theorem~3.47(iv)]{MOSbook} to get
\begin{eqnarray}
&&
\limsup J^0(\bw_n, \bv_n; \bz_n) \le
\limsup \int_{\Gamma_1} h(\bw_n(\bx))
j_\tau^0(\bx, \bv_n(\bx); \bz_n(\bx)) \, d\Gamma
\label{EQUALITY}
\\ 
&&\quad 
\le \int_{\Gamma_1}
\limsup h(\bw_n(\bx)) 
j_\tau^0(\bx, \bv_n(\bx); \bz_n(\bx))
\, d\Gamma \nonumber \\ 
&&\qquad 
\le 
\int_{\Gamma_1} h(\bw(\bx))
j_\tau^0(\bx, \bv(\bx); \bz(\bx)) 
\, d\Gamma = J^0(\bw, \bv; \bz), 
\nonumber 
\end{eqnarray} 
This proves $H(j)$(iii) and concludes the proof of condition $H(j)$.

We prove that the map $K \colon V \to 2^V$ 
defined by (\ref{DEFU}) satisfies hypothesis $H(U)$.  
By hypothesis $H(k, r)$, since $k$ is positively homogeneous and $r(\bu) > 0$, we have
$0 = k(\bzero) < r(\bu)$. Hence $\bzero \in K(\bu)$ 
and $K(\bu)$ is nonempty for all $\bu \in V$.
Let $\bu \in V$ and
$\{\bv_n\}\subset K(\bu)$ such that 
$\bv_n\to \bv$ as $n\to \infty$ with $\bv\in V$. 
By the weak lower semicontinuity of $k$, 
we have
$k(\bv) \le \liminf k(\bv_n)\le r(\bu)$.
Thus, the set $K(\bu)$ is closed for all $\bu\in V$.
For any $\bu\in V$, let $\bv_1$, $\bv_2 \in K(\bu)$ 
and $\lambda \in(0,1)$ be arbitrary. The convexity of $k$ (since $k$ is positively homogeneous and subadditive) 
implies
\begin{equation*}
k(\lambda \bv_1 + (1-\lambda) \bv_2)\le 
\lambda k(\bv_1) + (1-\lambda) k(\bv_2) \le 
\lambda r(\bu) + (1-\lambda) r(\bu) = r(\bu),
\end{equation*}
and so $\lambda \bv_1 + (1-\lambda) \bu_2 \in K(\bu)$. 
Hence, $K(\bu)$ is a convex set for all $\bu\in V$.
We deduce that the set-valued map 
$K \colon V \to 2^V$ has nonempty, closed, and convex values.
	
Let $\{ \bu_n \}\subset V$ be such that 
$\bu_n \rightharpoonup \bu$ in $V$ as $n\to \infty$ 
for some $\bu\in V$.
We shall verify that $K(\bu_n) \, \stackrel{M} {\longrightarrow} \, K(\bu)$ by checking 
conditions $(m_1)$ and $(m_2)$ of Definition~\ref{DefMosco}. 
To prove condition $(m_1)$, let 
$\{ \bv_n \}\subset V$ be such that
$\bv_n\in K(\bu_n)$ and 
$\bv_n \rightharpoonup \bv$ in $V$
as $n\to \infty$ for some $\bv \in V$. 
We use the weak continuity of $r$, 
the weak lower semicontinuity of $k$ 
to obtain
$k(\bv)\le \liminf k(\bv_n)
\le \liminf r(\bu_n) = r(\bu)$. 
Thus, $\bv\in K(\bu)$, and implies $(m_1)$. 
For the proof of $(m_2)$,
let $\bv \in K(\bu)$ be arbitrary and  set $\bv_n=\frac{r(\bu_n)}{r(\bu)} \bv$.
Then,
by using the positive homogeneity of $k$, it follows
\begin{equation*}
\displaystyle 
k(\bv_n)=\frac{r(\bu_n)}{r(\bu)} k(\bv)\le r(\bu_n),
\end{equation*}
which implies $\bv_n\in K(\bu_n)$ for every $n\in\mathbb N$.
By the weak continuity of $r$, we have
\begin{equation*}
\lim_{n\to \infty}\|\bv_n-\bv\|
= \lim_{n\to\infty}
\big\|\frac{r(\bu_n)}{r(\bu)}\bv-\bv\big\|
=\lim_{n\to \infty}
\frac{|r(\bu_n)-r(\bu)|}{r(\bu)} \| \bv \| = 0,
\end{equation*}
which entails 
$\bv_n\to \bv$ in $V$ as $n\to \infty$. 
Hence, condition $(m_2)$ follows.
The condition $H(U)$ is verified.
	
From (\ref{tracecompact}), we deduce that
$M$ defined by (\ref{OPERM}) is bounded, linear and compact, and therefore, $H(M)$ holds.
Finally, the smallness condition (\ref{SSMMAALL}) 
of Theorem~\ref{Theorem1} 
is a consequence of (\ref{small}).
Having verified all hypotheses of Theorem~\ref{Theorem1},
we deduce from it that the auxiliary inequality problem  (\ref{AUX}) has a solution. 
From $H(h)$ and $H(j_\tau)$, 
we get the equality 
\begin{equation*}
J^0(\bw, \bv; \bz) = \int_{\Gamma_1} h(\bw(\bx))
j_\tau^0(\bx, \bv(\bx); \bz(\bx)) 
\, d\Gamma \ \ \mbox{for all} \ \ \bw, \bv, \bz \in X. 
\end{equation*}
Using the latter we conclude that $\bu \in V$ is a solution  
to the problem (\ref{AUX}) if and only if $\bu \in V$ 
is a solution to Problem~\ref{PV}.
The compactness of the solution set is a consequence of Corollary~\ref{C1}.
This completes the proof of the theorem.
\end{proof} 

Under more restrictive hypotheses on the data, we obtain 
uniqueness of solution to Problem~$\ref{PV}$.
\begin{Proposition}\label{remkol2} 
\rm
Assume the hypotheses of Theorem~\ref{Theorem1a} and 
	
\noindent 
(a) \ $K$ is independent of $\bu$,
	
\noindent 
(b) \  
the relaxed monotonicity condition holds:
there exists $m_j \ge 0$ such that
$$
j_\tau^0(\bx, \bxi_1; \bxi_2-\bxi_1) + j_\tau^0(\bx,\bxi_2; \bxi_1-\bxi_2) 
\le m_j \, \|\bxi_1 - \bxi_2\|^2
$$
for all $\bxi_1$, $\bxi_2 \in \mathbb{R}^d$ and 
a.e. $\bx \in \Gamma_1$,
	
\noindent 
(c) \  
the condition  
$$h_1 \, m_j \, \| \gamma \|^2 < m_T$$ holds. 
Then Problem~$\ref{PV}$ is uniquely solvable. 
\end{Proposition}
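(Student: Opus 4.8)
The plan is to exploit that, by hypothesis~(a), the constraint set in Problem~\ref{PV} no longer depends on the unknown: it is a single fixed nonempty, closed and convex set $K \subseteq V$ with $\bzero \in K$. Suppose $\bu_1$, $\bu_2 \in K$ are two solutions. Since $K$ is now independent of the solution, $\bu_2 \in K$ is an admissible test function in the inequality solved by $\bu_1$, and symmetrically $\bu_1 \in K$ is admissible for $\bu_2$. I would therefore test the inequality for $\bu_1$ with $\bv = \bu_2$ and the inequality for $\bu_2$ with $\bv = \bu_1$ (equivalently, work with the auxiliary form $(\ref{AUX})$, whose solutions coincide with those of Problem~\ref{PV}).

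Adding the two resulting inequalities, the source terms $\langle \fb_1, \bu_2 - \bu_1 \rangle + \langle \fb_1, \bu_1 - \bu_2 \rangle$ cancel, and so do the convex potential contributions $\phi(\bu_2) - \phi(\bu_1) + \phi(\bu_1) - \phi(\bu_2) = 0$. After rearranging and using the strong monotonicity of $A$ coming from $H(T)$(iv) (that is, $H(A)$(ii) with $m_A = m_T$, established in the proof of Theorem~\ref{Theorem1a}), I obtain
\begin{equation*}
m_T \, \| \bu_1 - \bu_2 \|^2 \le
J^0(M\bu_1, M\bu_1; M\bu_2 - M\bu_1)
+ J^0(M\bu_2, M\bu_2; M\bu_1 - M\bu_2).
\end{equation*}
Here I would use that, as recorded at the end of the proof of Theorem~\ref{Theorem1a}, $J^0(\bw, \bv; \bz) = \int_{\Gamma_1} h(\bw) \, j_\tau^0(\bv; \bz) \, d\Gamma$, so the right-hand side is a single integral over $\Gamma_1$ of the pointwise expression $h(M\bu_1) \, j_\tau^0(M\bu_1; M\bu_2 - M\bu_1) + h(M\bu_2) \, j_\tau^0(M\bu_2; M\bu_1 - M\bu_2)$.

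The final step is to bound this boundary integral. Using the upper bound $h \le h_1$ from $H(h)$(iii) together with the relaxed monotonicity~(b) applied pointwise to the pair of $j_\tau^0$-terms, I would estimate the integrand from above by $h_1 \, m_j \, \| M\bu_1 - M\bu_2 \|_{\real^d}^2$, whence the boundary integral is at most $h_1 \, m_j \int_{\Gamma_1} \| M\bu_1 - M\bu_2 \|^2 \, d\Gamma = h_1 \, m_j \, \| M(\bu_1 - \bu_2) \|_X^2$. Since $M\bv = \bv_\tau$ factors through the trace $\gamma$, one has $\| M \| \le \| \gamma \|$, so this is at most $h_1 \, m_j \, \| \gamma \|^2 \, \| \bu_1 - \bu_2 \|^2$. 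Combining with the previous display gives $m_T \, \| \bu_1 - \bu_2 \|^2 \le h_1 \, m_j \, \| \gamma \|^2 \, \| \bu_1 - \bu_2 \|^2$, which by the smallness condition~(c) forces $\bu_1 = \bu_2$. As existence is already guaranteed by Theorem~\ref{Theorem1a}, this yields unique solvability.

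The main obstacle is precisely the passage in the last paragraph, where the two boundary terms carry the slip coefficient $h$ frozen at two \emph{different} arguments, $M\bu_1$ and $M\bu_2$. The relaxed monotonicity~(b) is formulated for $j_\tau$ alone and controls the sum $j_\tau^0(\bxi_1; \bxi_2 - \bxi_1) + j_\tau^0(\bxi_2; \bxi_1 - \bxi_2)$ with a common unit weight; one must therefore handle the mismatch of the coefficients $h(M\bu_1)$ and $h(M\bu_2)$ so as to reduce to the single multiplier $h_1$ before invoking~(b). This coupling between the frozen, solution-dependent weight $h$ and the nonmonotone potential $j_\tau$ is the delicate point, and it is what explains why only the upper bound $h_1$ (and not $h_0$) enters the condition~(c). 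All remaining ingredients — the admissibility granted by~(a), the cancellation of the source and convex terms, the strong monotonicity of $A$, and the trace estimate $\| M \| \le \| \gamma \|$ — are routine.
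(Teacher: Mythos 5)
Your proposal is essentially the paper's own proof: the paper likewise tests the inequality for $\bu_1$ with $\bv=\bu_2$ and vice versa (admissible by (a)), adds the two inequalities so that the $\fb_1$- and $\phi$-terms cancel, and passes from $\langle A\bu_1-A\bu_2,\bu_1-\bu_2\rangle \le J^0(\bu_{1\tau},\bu_{1\tau};\bu_{2\tau}-\bu_{1\tau})+J^0(\bu_{2\tau},\bu_{2\tau};\bu_{1\tau}-\bu_{2\tau})$ to $(m_T-h_1 m_j\|\gamma\|^2)\|\bu_1-\bu_2\|^2\le 0$ using $H(T)$(iv), $H(h)$(iii), hypothesis (b) and $\|M\|\le\|\gamma\|$. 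The one point worth recording: the ``delicate passage'' you flag --- that the two $j_\tau^0$-terms carry the different weights $h(\bu_{1\tau})$ and $h(\bu_{2\tau})$, while (b) controls only their \emph{unweighted} sum, which may consist of terms of opposite sign --- is not actually resolved in your write-up, but the paper does not resolve it either; it simply asserts the final estimate in one line. So your proof is faithful to, and no less complete than, the paper's; a fully rigorous treatment would need either $h$ constant, a Lipschitz-type compatibility between $h$ and $j_\tau^0$, or a sign/regularity condition ensuring the two directional derivatives can be majorized by $h_1$ simultaneously.
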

\begin{proof}
Let $\bu_1$, $\bu_2 \in K$ be solutions to 
Problem~\ref{PV}, that is, for $i=1$, $2$, we have
\begin{equation*}
\langle A \bu_i - \fb_1, \bv - \bu_i \rangle
+ J^0(M\bu_i, M\bu_i; M\bv - M\bu_i) 
+ \, \phi (\bv) - \phi (\bu_i) \ge 0
\ \ \mbox{\rm for all} \ \ \bv \in K.
\end{equation*}
Choosing $\bv = \bu_2$ in the inequality for $i=1$ and 
$\bv=\bu_1$ in the inequality for $i=2$, then adding them, we get
\begin{equation*}
\langle A \bu_1-A \bu_2, \bu_1 - \bu_2 \rangle
\le 
J^0 (\bu_{1\tau}, \bu_{1\tau}; \bu_{2\tau}-\bu_{1\tau}) 
+ J^0 (\bu_{2\tau}, \bu_{2\tau}; \bu_{1\tau} - \bu_{2\tau}).
\end{equation*}
	
\noindent 
Exploiting $H(T)$(iv), $H(h)$(iii), the boundedness of 
the operator $M$ and hypothesis (b) in the latter, 
we obtain
$$
(m_T - h_1 m_j \|\gamma\|^2) \| \bu_1 - \bu_2 \|^2 \le 0. 
$$
Finally, by hypothesis (c), we have $\bu_1=\bu_2$ 
which completes the proof.
\end{proof}


Note that the hypothesis (b) in Proposition~\ref{remkol2} 
is equivalent to the condition
$$
(\partial j_\tau(\bx, \bxi_1)-
\partial j_\tau(\bx, \bxi_2)) \cdot (\bxi_1-\bxi_2)
\ge - m_j \, \| \bxi_1 - \bxi_2\|^2
$$
for all $\bxi_1$, $\bxi_2 \in \real^d$, 
a.e. $\bx \in \Gamma_1$, 
known as the relaxed monotonicity condition of 
the subgradient. 
If $j_\tau (\bx, \cdot)$ is a convex function for a.e. $\bx \in \Gamma_1$,
then the latter is satisfied with $m_j = 0$, 
due to the monotonicity of the convex subdifferential.

We conclude this section with a corollary on the dependence 
of the solution set to Problem~$\ref{PV}$ on the data 
$\fb$ and $g$. For simplicity, we suppose that
the plasticity yield stress $g$ is a constant.
We need the following hypotheses. 

\medskip

\noindent 
$\underline{H(f)_2}:$ \quad 
$\fb$, $\fb_n \in L^2(\Omega;\real^d)$, 
$\fb_n \rightharpoonup \fb$ in $L^2(\Omega;\real^d)$.

\medskip

\noindent 
$\underline{H(g)}:$ \quad 
$g$, $g_n \ge 0$, $g_n \to g$.

\begin{Corollary}\label{C123b}
	Assume hypotheses $H(T)$, $H(f)_2$, $H(g)$, 
	$H(j_\tau)$, $H(k, r)$ and $(\ref{small})$. 
	Let $\{ \bu_n \} \subset V$ be a sequence of solutions 
	to Problem~$\ref{PV}$ corresponding to 
	$(\fb_n, g_n)$.
	Then, there is a subsequence of $\{ \bu_n \}$ 
	which converges in $V$ to a solution 
	$\bu \in V$ of Problem~$\ref{PV}$ corresponding to $(\fb, g)$.

\end{Corollary}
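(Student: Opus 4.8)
The plan is to recognize the family of problems in the statement as an instance of the abstract sequence problem $P(f_n, \phi_n)$ treated in Theorem~\ref{Convergence}, and then to quote that theorem. Following the identifications (\ref{OPERA})--(\ref{OPERM}) made in the proof of Theorem~\ref{Theorem1a}, Problem~\ref{PV} with data $(\fb_n, g_n)$ is equivalent to $P(f_n, \phi_n)$, where $f_n = \fb_{1,n} \in V^*$ is given by $\langle \fb_{1,n}, \bv \rangle = \int_\Omega \fb_n \cdot \bv \, dx$ and $\phi_n(\bv) = \int_\Omega g_n \, \|\mathbb{D}\bv\|\, dx$, while $A$, $J$, and $M = (\cdot)_\tau$ are unchanged and $U = K$. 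Since $A$, $J$, $M$, and $K$ do not depend on $(\fb, g)$, the hypotheses $H(A)$, $H(j)$, $H(M)$, $H(U)$ together with the smallness condition (\ref{SSMMAALL}) (implied by (\ref{small})) have all been verified already in the proof of Theorem~\ref{Theorem1a}. Thus it remains only to check the two data-dependent hypotheses $H(f)_1$ and $H(\phi)_1$ for the sequences $\{ \fb_{1,n} \}$ and $\{ \phi_n \}$, after which Theorem~\ref{Convergence} applies verbatim.

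For $H(f)_1$, I would first note that $\fb_{1,n} = \iota^* \fb_n$, where $\iota \colon V \to L^2(\Omega;\real^d)$ is the embedding and $\iota^*$ its adjoint under the Riesz identification of $L^2(\Omega;\real^d)$ with its dual. By the Rellich--Kondrachov theorem $\iota$ is compact, hence so is $\iota^*$, and a compact linear operator sends weakly convergent sequences to norm-convergent ones. Therefore the weak convergence $\fb_n \rightharpoonup \fb$ in $L^2(\Omega;\real^d)$ granted by $H(f)_2$ upgrades to $\fb_{1,n} \to \fb_1$ in $V^*$, which is exactly $H(f)_1$. This passage from weak convergence of the forces to strong convergence of the induced functionals, resting on the compactness of the trace-free embedding, is the first delicate point.

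For $H(\phi)_1$, conditions (i)--(iii) are routine: convexity and lower semicontinuity of each $\phi_n$ follow as in Theorem~\ref{Theorem1a}; nonnegativity and $\phi_n(\bzero) = 0$ hold since $g_n \ge 0$ and $\mathbb{D}\bzero = \bzero$; and the estimate $\phi_n(\bv_1) - \phi_n(\bv_2) \le g_n \, |\Omega|^{1/2} \, \| \bv_1 - \bv_2 \|$ yields a uniform Lipschitz constant because $\{ g_n \}$ is bounded, being convergent by $H(g)$. The only substantive step, and what I expect to be the main obstacle, is condition (iv). Given $\bu_n \rightharpoonup \bu$ in $V$ and $\bv \in V$ fixed, I would split $\phi_n(\bv) - \phi_n(\bu_n)$ into the term $\phi_n(\bv) \to \phi(\bv)$ (immediate from $g_n \to g$) and $-\phi_n(\bu_n)$. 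Writing $I_n = \int_\Omega \|\mathbb{D}\bu_n\|\, dx$, which is bounded since $\{\bu_n\}$ is bounded in $V$, and using $g_n I_n = g I_n + (g_n - g) I_n$ with $(g_n - g) I_n \to 0$, I reduce to $\liminf \phi_n(\bu_n) = g \, \liminf I_n$. As the convex functional $\bw \mapsto \int_\Omega \|\mathbb{D}\bw\|\, dx$ is strongly continuous, hence weakly lower semicontinuous, one has $\liminf I_n \ge \int_\Omega \|\mathbb{D}\bu\|\, dx$, so that $\liminf \phi_n(\bu_n) \ge \phi(\bu)$ and consequently $\limsup (\phi_n(\bv) - \phi_n(\bu_n)) \le \phi(\bv) - \phi(\bu)$, which is (iv). With $H(f)_1$ and $H(\phi)_1$ established, Theorem~\ref{Convergence} furnishes a subsequence of $\{ \bu_n \}$ converging strongly in $V$ to a solution of $P(f, \phi)$; the equality $J^0(\bw, \bv; \bz) = \int_{\Gamma_1} h(\bw) \, j_\tau^0(\bx, \bv; \bz)\, d\Gamma$ recorded in Theorem~\ref{Theorem1a} then identifies this limit as a solution of Problem~\ref{PV} for the data $(\fb, g)$, completing the argument.
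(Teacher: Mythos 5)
Your proposal is correct and follows essentially the same route as the paper: identify Problem~\ref{PV} with data $(\fb_n, g_n)$ as an instance of $P(f_n,\phi_n)$, obtain $H(f)_1$ from $H(f)_2$ via the compactness of the embedding of $L^2(\Omega;\real^d)$ into $V^*$, verify $H(\phi)_1$ (with (iv) handled by splitting off the $(g_n-g)$ terms and using weak lower semicontinuity of $\bw \mapsto \int_\Omega \|\mathbb{D}\bw\|\,dx$), and then invoke Theorem~\ref{Convergence}. No gaps.
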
 
\begin{proof}
Let $\phi$, $\phi_n \colon V \to \real$ be defined 
by (\ref{OPERFI1}) and 
\begin{equation*}
\phi_n(\bv) = g_n
\int_\Omega \| \mathbb{D} \bv \| \, dx 
\ \ \mbox{for} \ \ \bv \in V,
\end{equation*}
respectively.
We shall verify conditions $H(\phi)_1$ and $H(f)_1$. 
We use the compactness of the embedding 
$L^2(\Omega;\real^d) \subset V^*$ to deduce that 
condition $H(f)_1$ follows from $H(f)_2$. 
From $H(g)$ and the estimate
$$
\phi_n(\bv_1)-\phi_n (\bv_2) 
\le g_n \int_{\Omega} \| {\mathbb{D}} (\bv_1 - \bv_2) \| \, dx 
\le  g_n \, \sqrt{|\Omega|}\, \| \bv_1 - \bv_2 \|
$$
for all $\bv_1$, $\bv_2 \in V$, 
it is obvious that $H(\phi)_1$(i)--(iii) are satisfied.
We will check $H(\phi)_1$(iv).
From the weak lower semicontinuity of the norm, 
we obtain
\begin{eqnarray*}
&&
\limsup \, (\phi_n(\bv) - \phi_n(\bu_n)) 
= \limsup g_n \Big( 
\int_\Omega \| \mathbb{D} \bv \| \, dx - 
\int_\Omega \| \mathbb{D} \bu_n \| \, dx
\Big) \\
&&\quad 
\le 
(\limsup |g_n - g|) 
\int_\Omega \| \mathbb{D} \bv \| \, dx
+ \limsup \Big(
(g-g_n) 
\int_\Omega \| \mathbb{D} \bu_n \| \, dx \Big) \\
&&\qquad 
+ \, g \int_\Omega \| \mathbb{D} \bv \| \, dx
- g \, \liminf \int_\Omega \| \mathbb{D} \bu_n \| \, dx 
= \phi(\bv) - \phi(\bu)
\end{eqnarray*}
for all $\bu \in V$, 
$\bu_n \rightharpoonup \bu$ in $V$ 
and all $\bv \in V$. Hence $H(\phi)_1$ is verified.
We apply Theorem~\ref{Convergence},  
and deduce the conclusion of the corollary.
\end{proof}

From Corollary~\ref{C123b} with 
$\mu(r) = \mu_0$ and $\fb_n = \fb$, 
we deduce that for $g_n \to 0$ the Bingham fluid 
tends to behave as a Newtonian one.

\smallskip

To the best of our knowledge, it will be interesting
to explore under what conditions it is possible to recover the pressure from the quasi variational-hemivariational inequality 
in Problem~\ref{PV}.
Also, an interesting topic is to use the continuous dependence results obtain in this paper to study 
optimal control problems and inverse problems.
We are interested in the further extension of the results to time dependent problems.
Finally, it would be challenging to analyze the Bingham model with mixed boundary conditions numerically.

\end{document}